\documentclass[a4paper,10pt, oneside]{amsart}
\usepackage[utf8]{inputenc}
\usepackage{url}
\usepackage[hidelinks]{hyperref}
\usepackage{pdfpages}
\usepackage[DIV=10]{typearea}

\usepackage{microtype}
\usepackage{amssymb,amsmath,amsopn,amsxtra,amsthm,amsfonts}
\usepackage{MnSymbol}
\usepackage[mathcal]{euscript}
\let\mathscr\mathcal
\usepackage[bb=px]{mathalfa}

\setlength{\textwidth}{450pt}
\usepackage[T1]{fontenc}

\newcommand{\yo}{\text{\usefont{U}{min}{m}{n}\symbol{'210}}}

\DeclareFontFamily{U}{min}{}
\DeclareFontShape{U}{min}{m}{n}{<-> udmj30}{}

\usepackage{enumitem}
\setlist[enumerate,1]{label={(\arabic*)},itemsep=\parskip} 
\setlist[itemize,1]{itemsep=\parskip} 
\newlist{thmlist}{enumerate}{2}
\setlist[thmlist,1]{label={\em(\roman*)},ref={(\roman*)},%
  itemsep=\parskip,leftmargin=*,align=left}
\setlist[thmlist,2]{label={\em(\alph*)},ref={(\alph*)},%
  itemsep=\parskip,leftmargin=*,align=left,topsep=0.1cm}
\newlist{defnlist}{enumerate}{2}
\setlist[defnlist,1]{label={(\roman*)},ref={(\roman*)},itemsep=\parskip,%
  leftmargin=*,align=left}
\setlist[defnlist,2]{label={(\alph*)},ref={(\alph*)},itemsep=\parskip,%
  leftmargin=*,align=left,topsep=0.1cm}


\setlength{\parindent}{1em}


 \newtheorem*{thm*}{Theorem}
\newtheorem{thmX}{Theorem}

\newtheorem{thm}[subsection]{Theorem}
\newtheorem{cor}[subsection]{Corollary}
\newtheorem{lem}[subsection]{Lemma}
\newtheorem{prop}[subsection]{Proposition}

\newtheorem{conj}[subsection]{Conjecture}



\theoremstyle{definition}
\newtheorem{defn}[subsection]{Definition}

\newtheorem{rem}[subsection]{Remark}

\newtheorem{exam}[subsection]{Example}

\newtheorem{constr}[subsection]{Construction}

\renewcommand{\eqref}[1]{(\ref{#1})}

\numberwithin{equation}{subsection}

\usepackage{tikz}
\usetikzlibrary{matrix}
\usepackage{tikz-cd}


\usepackage{xspace}

\usepackage{xifthen}

\usepackage{xparse}

\usepackage{mathtools}


\newcommand{\nc}{\newcommand}
\nc{\renc}{\renewcommand}
\nc{\ssec}{\subsection}
\nc{\sssec}{\subsubsection}
\nc{\on}{\operatorname}
\nc{\term}[1]{#1\xspace}


\nc{\sA}{\ensuremath{\mathcal{A}}\xspace}
\nc{\sB}{\ensuremath{\mathcal{B}}\xspace}
\nc{\sC}{\ensuremath{\mathcal{C}}\xspace}
\nc{\sD}{\ensuremath{\mathcal{D}}\xspace}
\nc{\sE}{\ensuremath{\mathcal{E}}\xspace}
\nc{\sF}{\ensuremath{\mathcal{F}}\xspace}
\nc{\sG}{\ensuremath{\mathcal{G}}\xspace}
\nc{\sH}{\ensuremath{\mathcal{H}}\xspace}
\nc{\sI}{\ensuremath{\mathcal{I}}\xspace}
\nc{\sJ}{\ensuremath{\mathcal{J}}\xspace}
\nc{\sK}{\ensuremath{\mathcal{K}}\xspace}
\nc{\sL}{\ensuremath{\mathcal{L}}\xspace}
\nc{\sM}{\ensuremath{\mathcal{M}}\xspace}
\nc{\sN}{\ensuremath{\mathcal{N}}\xspace}
\nc{\sO}{\ensuremath{\mathcal{O}}\xspace}
\nc{\sP}{\ensuremath{\mathcal{P}}\xspace}
\nc{\sQ}{\ensuremath{\mathcal{Q}}\xspace}
\nc{\sR}{\ensuremath{\mathcal{R}}\xspace}
\nc{\sS}{\ensuremath{\mathcal{S}}\xspace}
\nc{\sT}{\ensuremath{\mathcal{T}}\xspace}
\nc{\sU}{\ensuremath{\mathcal{U}}\xspace}
\nc{\sV}{\ensuremath{\mathcal{V}}\xspace}
\nc{\sW}{\ensuremath{\mathcal{W}}\xspace}
\nc{\sX}{\ensuremath{\mathcal{X}}\xspace}
\nc{\sY}{\ensuremath{\mathcal{Y}}\xspace}
\nc{\sZ}{\ensuremath{\mathcal{Z}}\xspace}

\nc{\bA}{\ensuremath{\mathbf{A}}\xspace}
\nc{\bB}{\ensuremath{\mathbf{B}}\xspace}
\nc{\bC}{\ensuremath{\mathbf{C}}\xspace}
\nc{\bD}{\ensuremath{\mathbf{D}}\xspace}
\nc{\bE}{\ensuremath{\mathbf{E}}\xspace}
\nc{\bF}{\ensuremath{\mathbf{F}}\xspace}
\nc{\bG}{\ensuremath{\mathbf{G}}\xspace}
\nc{\bH}{\ensuremath{\mathbf{H}}\xspace}
\nc{\bI}{\ensuremath{\mathbf{I}}\xspace}
\nc{\bJ}{\ensuremath{\mathbf{J}}\xspace}
\nc{\bK}{\ensuremath{\mathbf{K}}\xspace}
\nc{\bL}{\ensuremath{\mathbf{L}}\xspace}
\nc{\bM}{\ensuremath{\mathbf{M}}\xspace}
\nc{\bN}{\ensuremath{\mathbf{N}}\xspace}
\nc{\bO}{\ensuremath{\mathbf{O}}\xspace}
\nc{\bP}{\ensuremath{\mathbf{P}}\xspace}
\nc{\bQ}{\ensuremath{\mathbf{Q}}\xspace}
\nc{\bR}{\ensuremath{\mathbf{R}}\xspace}
\nc{\bS}{\ensuremath{\mathbf{S}}\xspace}
\nc{\bT}{\ensuremath{\mathbf{T}}\xspace}
\nc{\bU}{\ensuremath{\mathbf{U}}\xspace}
\nc{\bV}{\ensuremath{\mathbf{V}}\xspace}
\nc{\bW}{\ensuremath{\mathbf{W}}\xspace}
\nc{\bX}{\ensuremath{\mathbf{X}}\xspace}
\nc{\bY}{\ensuremath{\mathbf{Y}}\xspace}
\nc{\bZ}{\ensuremath{\mathbf{Z}}\xspace}

\nc{\dA}{\ensuremath{\mathds{A}}\xspace}
\nc{\dB}{\ensuremath{\mathds{B}}\xspace}
\nc{\dC}{\ensuremath{\mathds{C}}\xspace}
\nc{\dD}{\ensuremath{\mathds{D}}\xspace}
\nc{\dE}{\ensuremath{\mathds{E}}\xspace}
\nc{\dF}{\ensuremath{\mathds{F}}\xspace}
\nc{\dG}{\ensuremath{\mathds{G}}\xspace}
\nc{\dH}{\ensuremath{\mathds{H}}\xspace}
\nc{\dI}{\ensuremath{\mathds{I}}\xspace}
\nc{\dJ}{\ensuremath{\mathds{J}}\xspace}
\nc{\dK}{\ensuremath{\mathds{K}}\xspace}
\nc{\dL}{\ensuremath{\mathds{L}}\xspace}
\nc{\dM}{\ensuremath{\mathds{M}}\xspace}
\nc{\dN}{\ensuremath{\mathds{N}}\xspace}
\nc{\dO}{\ensuremath{\mathds{O}}\xspace}
\nc{\dP}{\ensuremath{\mathds{P}}\xspace}
\nc{\dQ}{\ensuremath{\mathds{Q}}\xspace}
\nc{\dR}{\ensuremath{\mathds{R}}\xspace}
\nc{\dS}{\ensuremath{\mathds{S}}\xspace}
\nc{\dT}{\ensuremath{\mathds{T}}\xspace}
\nc{\dU}{\ensuremath{\mathds{U}}\xspace}
\nc{\dV}{\ensuremath{\mathds{V}}\xspace}
\nc{\dW}{\ensuremath{\mathds{W}}\xspace}
\nc{\dX}{\ensuremath{\mathds{X}}\xspace}
\nc{\dY}{\ensuremath{\mathds{Y}}\xspace}
\nc{\dZ}{\ensuremath{\mathds{Z}}\xspace}

\nc{\bbA}{\ensuremath{\mathbb{A}}\xspace}
\nc{\bbB}{\ensuremath{\mathbb{B}}\xspace}
\nc{\bbC}{\ensuremath{\mathbb{C}}\xspace}
\nc{\bbD}{\ensuremath{\mathbb{D}}\xspace}
\nc{\bbE}{\ensuremath{\mathbb{E}}\xspace}
\nc{\bbF}{\ensuremath{\mathbb{F}}\xspace}
\nc{\bbG}{\ensuremath{\mathbb{G}}\xspace}
\nc{\bbH}{\ensuremath{\mathbb{H}}\xspace}
\nc{\bbI}{\ensuremath{\mathbb{I}}\xspace}
\nc{\bbJ}{\ensuremath{\mathbb{J}}\xspace}
\nc{\bbK}{\ensuremath{\mathbb{K}}\xspace}
\nc{\bbL}{\ensuremath{\mathbb{L}}\xspace}
\nc{\bbM}{\ensuremath{\mathbb{M}}\xspace}
\nc{\bbN}{\ensuremath{\mathbb{N}}\xspace}
\nc{\bbO}{\ensuremath{\mathbb{O}}\xspace}
\nc{\bbP}{\ensuremath{\mathbb{P}}\xspace}
\nc{\bbQ}{\ensuremath{\mathbb{Q}}\xspace}
\nc{\bbR}{\ensuremath{\mathbb{R}}\xspace}
\nc{\bbS}{\ensuremath{\mathbb{S}}\xspace}
\nc{\bbT}{\ensuremath{\mathbb{T}}\xspace}
\nc{\bbU}{\ensuremath{\mathbb{U}}\xspace}
\nc{\bbV}{\ensuremath{\mathbb{V}}\xspace}
\nc{\bbW}{\ensuremath{\mathbb{W}}\xspace}
\nc{\bbX}{\ensuremath{\mathbb{X}}\xspace}
\nc{\bbY}{\ensuremath{\mathbb{Y}}\xspace}
\nc{\bbZ}{\ensuremath{\mathbb{Z}}\xspace}

\nc{\mrm}[1]{\ensuremath{\mathrm{#1}}\xspace}
\nc{\mit}[1]{\ensuremath{\mathit{#1}}\xspace}
\nc{\mbf}[1]{\ensuremath{\mathbf{#1}}\xspace}
\nc{\mcal}[1]{\ensuremath{\mathcal{#1}}\xspace}
\nc{\msc}[1]{\ensuremath{\mathscr{#1}}\xspace}
\nc{\mfr}[1]{\ensuremath{\mathfrak{#1}}\xspace}

\renc{\bar}[1]{\overline{#1}}
%
%

\nc{\sub}{\subset}
\nc{\too}{\longrightarrow}
\nc{\hook}{\hookrightarrow}
\nc*{\hooklongrightarrow}{\ensuremath{\lhook\joinrel\relbar\joinrel\rightarrow}}
\nc{\hooklong}{\hooklongrightarrow}
\nc{\twoheadlongrightarrow}{\relbar\joinrel\twoheadrightarrow}
\nc{\shiso}{\approx}
\nc{\isoto}{\xrightarrow{\sim}}
\nc{\isofrom}{\xleftarrow{\sim}}
\renc{\ge}{\geqslant}
\renc{\le}{\leqslant}

\nc{\id}{\mathrm{id}}

\DeclareMathOperator{\Hom}{\on{Hom}}
\nc{\uHom}{\underline{\smash{\Hom}}}
\DeclareMathOperator{\Maps}{\on{Maps}}

\DeclareMathOperator{\End}{\on{End}}

\nc{\uEnd}{\underline{\smash{\End}}}

\renc{\lim}{\varprojlim}
\makeatletter
\newcommand{\colim@}[2]{%
  \vtop{\m@th\ialign{##\cr
    \hfil$#1\operator@font colim$\hfil\cr
    \noalign{\nointerlineskip\kern1.5\ex@}#2\cr
    \noalign{\nointerlineskip\kern-\ex@}\cr}}%
}
\newcommand{\colim}{%
  \mathop{\mathpalette\colim@{\rightarrowfill@\textstyle}}\nmlimits@
}
\makeatother

\nc{\Cofib}{\on{Cofib}}
\nc{\Fib}{\on{Fib}}
\nc{\initial}{\varnothing}
\nc{\op}{\mathrm{op}}



\nc{\Spc}{\mrm{Spc}}
\nc{\Spt}{\mrm{Spt}}
\nc{\Spec}{\on{Spec}}
\nc{\Stk}{\mrm{Stk}}
\nc{\Sch}{\mrm{Sch}}
\nc{\aff}{\mrm{aff}}
\nc{\A}{\mbf{A}}
\renc{\P}{\mbf{P}}
\nc{\cl}{{\mrm{cl}}}
\nc{\bDelta}{\mathbf{\Delta}}
\nc{\un}{\mathbf{1}}
\nc{\Tot}{\on{Tot}}
\nc{\Cech}{\textnormal{\v{C}}}
\nc{\Mod}{\mrm{Mod}}
\nc{\Qcoh}{\on{Qcoh}}
\nc{\free}{\mrm{free}}
\nc{\ex}{\mrm{ex}}
\nc{\perf}{\mrm{perf}}
\nc{\aperf}{\mrm{aperf}}
\nc{\coh}{\mrm{coh}}
\nc{\Cat}{\mrm{Cat}}
\nc{\unitm}{\mbf{1}}
\nc{\sphere}{\mbf{S}}
\nc{\Z}{\mbf{Z}}
\nc{\Map}{\mrm{Map}}
\nc{\map}{\mrm{map}}
\nc{\PrL}{\mathcal{P}r^\mrm{L}}
\nc{\PrLst}{\mathcal{P}r^\mrm{L}_\mrm{St}}
\nc{\Motnc}{\mathcal{M}_{\mrm{loc}}}
\nc{\Motadd}{\mathcal{M}_{\mrm{add}}}
\nc{\Einfty}{{\sE_\infty}}
\nc{\E}[1]{{\sE_{#1}}}
\nc{\modmod}{/\!\!/}
\nc{\heart}{\heartsuit}
\nc{\proj}{\mrm{proj}}
\nc{\LL}{\on{L}}
\nc{\K}{\on{K}}
\nc{\G}{\on{G}}
\nc{\GL}{\on{GL}}
\nc{\BGL}{\on{BGL}}
\nc{\M}{\on{M}}
\nc{\KH}{\on{KH}}
\nc{\Alg}{\on{Alg}}
\nc{\CAlg}{\on{CAlg}}
\nc{\cn}{\mrm{cn}}
\nc{\hw}{\mrm{Hw}}
\nc{\htt}{\mrm{Ht}}
\nc{\Fun}{\on{Fun}}
\nc{\Funadd}{\on{Fun}_{\mrm{add}}}
\nc{\Funex}{\on{Fun}_{\mrm{ex}}}
\nc{\Ind}{\on{Ind}}
\nc{\Pro}{\on{Pro}}
\nc{\Kar}{\on{Kar}}
\nc{\Obj}{\on{Obj}}


\nc{\scr}{\term{simplicial commutative ring}}
\nc{\scrs}{\term{simplicial commutative rings}}

\nc{\Einfring}{\term{$\Einfty$-ring}}
\nc{\Einfrings}{\term{$\Einfty$-rings}}

\nc{\Ering}{\term{$\sE_1$-ring}}
\nc{\Erings}{\term{$\sE_1$-rings}}

\nc{\inftyCat}{\term{$\infty$-category}}
\nc{\inftyCats}{\term{$\infty$-categories}}

\nc{\inftyTop}{\term{$\infty$-topos}}
\nc{\inftyTops}{\term{$\infty$-toposes}}

\nc{\inftyGrpd}{\term{$\infty$-groupoid}}
\nc{\inftyGrpds}{\term{$\infty$-groupoids}}
\title{}

\begin{document}

\title{Every spectrum is the K-theory of a stable $\infty$-category}
\author{Maxime Ramzi}
\address{K\o benhavns Universitet, Institut for matematiske fag \\
Universitetsparken 5\\
2100 K\o benhavn \O \\
Denmark
}
\email{\href{mailto:ramzi@math.ku.dk}{ramzi@math.ku.dk}}
\author{Vladimir Sosnilo}
\address{
Fakult\"at f\"ur Mathematik \\
Universit{\"a}t Regensburg\\
93040 Regensburg\\
Germany
}
\email{\href{mailto:vsosnilo@gmail.com}{vsosnilo@gmail.com}}
\author{Christoph Winges}
\address{
Fakult\"at f\"ur Mathematik \\
Universit{\"a}t Regensburg\\
93040 Regensburg\\
Germany
}
\email{\href{mailto:christoph.winges@ur.de}{christoph.winges@ur.de}}

\bibliographystyle{alphamod}

\begin{abstract}
We prove that any spectrum is equivalent to the nonconnective K-theory of a stable $\infty$-category. 
We use these results to construct a stable $\infty$-category $\sC$ with a bounded t-structure such that 
$\K(\sC)$ is not equivalent to $\K(\sC^\heartsuit)$, disproving a conjecture of Antieau, Gepner, and Heller.
\end{abstract}

\maketitle
{}

\setcounter{tocdepth}{1}
\tableofcontents

\section*{Introduction}

\ssec{} 
Throughout the history of mathematics it has been observed that important numerical 
invariants of mathematical objects admit {\it categorifications}. For example, the Euler characteristics and the Betti 
numbers of a space are categorified by the homology, the Hilbert polynomial of a scheme is categorified by the coherent 
cohomology, and various knot-theoretic invariants are categorified by Khovanov homology \cite{KhovanovJones} and Knot-Floer 
homology \cite{OszvathSzabo}. 
More precisely, categorifying an invariant valued in some abelian group $A$ means finding a stable $\infty$-category 
$\sC_A$ with $\K_0(\sC_A) \simeq A$ such that the given invariant lifts to 
functor valued in $\sC_A$. In this situation, $\sC_A$ is thought of as a categorification of $A$. 
In \cite{Khovanov_categotification} Mikhail Khovanov asked several questions about the extent to which categorification is 
possible. In particular, he asked whether it is possible to categorify the rational numbers, that is, to find a stable 
$\infty$-category $\sC_\mbf{Q}$ with $\K_0(\sC_\mbf{Q}) \simeq \mbf{Q}$. On the level of abelian groups, this question was eventually answered in the paper 
\cite{Barwick2019}. More generally, for a given stable $\infty$-category $\sC$ and a set of primes $S \subset \Z$ they 
construct a stable $\infty$-category $S^{-1}\sC$ satisfying 
\[
\K(S^{-1}\sC) \simeq S^{-1}\K(\sC).
\]
In this paper we show that every spectrum admits a categorification. 

\begin{thmX}[Theorem~\ref{thm:cat_spectrum}]\label{thmX:cat_spectrum}
For every spectrum $M$ there exists a small idempotent complete stable $\infty$-category $\sC_M$ such that 
\[
\K(\sC_M) \simeq M.
\]
Here $\K$ denotes the nonconnective K-theory spectrum. Moreover, the construction of $\sC_M$ is functorial in $M$.
\end{thmX}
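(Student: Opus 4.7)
The plan is to exploit three core properties of nonconnective $\K$-theory on idempotent complete stable $\infty$-categories: (i) $\K(\Spt^\omega) \simeq \sphere$, (ii) $\K$ preserves filtered colimits, and (iii) $\K$ sends Verdier sequences to cofiber sequences of spectra. I would show that the essential image of $\K$ is closed under enough operations---suspension, direct sum, filtered colimit, and cofibers of realizable maps---to cover all of $\Spt$, and then build $\sC_M$ for a general $M$ by a cell-by-cell construction, with functoriality arranged by performing every step universally.

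The first moves are the easy ones: shifts and sums. To realize $\Sigma\K(\sC)$, I would use a flasque or Calkin-type construction $\sF\sC$ containing $\sC$ as a Verdier subcategory with $\K(\sF\sC) \simeq 0$ via an Eilenberg swindle; the Verdier quotient $\sF\sC/\sC$ then satisfies $\K(\sF\sC/\sC) \simeq \Sigma\K(\sC)$ by (iii). Iterating, together with the desuspensions available in $\Spt$ itself, yields all integer shifts of $\sphere$ as $\K$-theories. Closure under direct sums follows from the natural splitting $\K(\sC \oplus \sD) \simeq \K(\sC) \oplus \K(\sD)$, and closure under filtered colimits is (ii).

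The main obstacle is realizing the cofiber of an arbitrary map $f \colon N_1 \to N_2$ of realizable spectra: a generic map $\K(\sC_1) \to \K(\sC_2)$ need not lift to an exact functor $\sC_1 \to \sC_2$, so one cannot simply take a categorical pushout. To circumvent this, I would pass to the motivic category $\Motnc$ of Blumberg--Gepner--Tabuada, in which $\K$-theory becomes corepresentable by the motive of $\Spt^\omega$, so that $\K(\sC) \simeq \Map_{\Motnc}(\unitm, \mathcal{U}_{\mrm{loc}}(\sC))$. In this setting the problem splits cleanly into two pieces: exhibit a sufficiently large family of objects of $\Motnc$ in the essential image of the motive functor, and show that mapping out of $\unitm$ into this family hits every spectrum. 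Both are then controlled by combining the closure properties already assembled with the corepresentability; the delicate realization of cofibers on the categorical level is replaced by the automatic existence of cofibers in $\Motnc$.

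Finally, functoriality in $M$ is obtained by carrying out the construction as a functor from a cell-filtration diagram of $M$ into $\StCats$, using the universal properties of the pushouts, filtered colimits, and Verdier quotients involved. This yields a functor $\Spt \to \StCats$ whose composite with $\K$ is naturally equivalent to the identity, proving the theorem.
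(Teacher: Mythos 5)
Your proposal correctly identifies the core obstruction --- a map of spectra $\K(\sC_1) \to \K(\sC_2)$ generally does not lift to an exact functor, so cofibers cannot be realized by a naive categorical pushout --- but the proposed workaround does not resolve it. Passing to $\Motnc$ and invoking the ``automatic existence of cofibers'' there is circular: the cofiber in $\Motnc$ of a map between motives $\mathcal{U}_{\mrm{loc}}(\sC_1) \to \mathcal{U}_{\mrm{loc}}(\sC_2)$ is indeed an object of $\Motnc$, but there is no reason for it to lie in the essential image of $\mathcal{U}_{\mrm{loc}}$, which is exactly what the theorem requires (you need a small stable $\infty$-category $\sC_M$, not just a noncommutative motive with the right mapping spectrum out of $\unitm$). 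The sentence ``exhibit a sufficiently large family of objects of $\Motnc$ in the essential image \ldots and show that mapping out of $\unitm$ \ldots hits every spectrum'' is a restatement of the theorem, not an argument; no mechanism is supplied for producing the family.

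A second gap: you only categorify \emph{suspension} (via the Calkin/flasque quotient) and claim that ``desuspensions available in $\Spt$ itself'' supply the negative shifts. This does not make sense --- one needs a categorical operation $\Gamma$ with $\K(\Gamma\sC) \simeq \Omega\K(\sC)$, and no amount of desuspending spectra produces one. In the paper this is a genuinely nontrivial ingredient, Theorem~\ref{thm:categorifying_loops}, resting on the binary-complex machinery of \cite{KasprowskiWinges}. What actually closes the argument in the paper is the Dundas--McCarthy formula (Theorem~\ref{thm:DM}), which exhibits an arbitrary spectrum $M$ as the sequential colimit $\colim_n \Omega^n\widetilde{\K}(\Spt^\omega;\Sigma^n M)$ in which every term is visibly realizable (as twisted endomorphism categories) and, crucially, every transition map is realizable: it factors as the unit $\eta$ followed by maps induced by exact functors, and $\eta$ itself is categorified in Theorem~\ref{thm:unit_loopssuspension}. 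This carefully chosen presentation is what sidesteps the non-realizability problem; your proposal, lacking both $\Gamma$ and the Dundas--McCarthy input, has no substitute for it.
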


One could also view this result as a variant of Thomason's theorem that says that every connective spectrum can be 
recovered as the group completion of the underlying monoid of a symmetric monoidal 1-category 
\cite{Thomason1995}. 

To prove this theorem, we first show that several natural constructions one may perform 
with the K-theory spectrum can be naturally lifted to constructions on the categorical level. These include:
\begin{enumerate}
\item suspensions (Proposition~\ref{prop:categoifying_cofiber});
\item loops (Theorem~\ref{thm:categorifying_loops});
\item cofibers of maps induced by functors (Proposition~\ref{prop:categoifying_cofiber}).
\end{enumerate}
The rest of the proof uses the Dundas--McCarthy formula for computing 
$\mrm{THH}(\sphere; M) \simeq M$ (Theorem~\ref{thm:DM}) and carefully performs all the constructions involved in it at the 
categorical level. 

\ssec{} 
One of the most fundamental results about algebraic K-theory is the {\it theorem of the heart}. 
In Barwick's formulation \cite{Barwick_heart}, it says that for any small stable $\infty$-category $\sC$ with a bounded t-structure, there is a canonical equivalence 
\[
\K^{\mrm{cn}}(\sD^b(\sC^\heartsuit)) \simeq \K^{\mrm{cn}}(\sC). 
\]
Here $\K^{\mrm{cn}}$ stands for connective K-theory. Antieau, Gepner, and Heller observed that this equivalence 
can be extended to nonconnective K-theory, as long as the heart $\sC^\heartsuit$ is a noetherian abelian 
category \cite[Theorem~1.3]{AGH}. 
Their Conjecture~C states that the {\it nonconnective theorem of the heart} holds in general, i.e. the map 
\[
u:\K(\sD^b(\sC^\heartsuit)) \to \K(\sC)
\]
is always an equivalence, without any assumptions on the heart. This was also asked by 
Burklund and Levy \cite[Question~1.4]{Burklund_2023}. As the main application of our results, we show that it is not 
true.
\begin{thmX}[\ref{sssec:counterexample}]\label{thmX:tohisfalse}
The nonconnective theorem of the heart is false: there exists a small stable $\infty$-category with a bounded t-structure such that the map $u$ is not an equivalence. 
\end{thmX}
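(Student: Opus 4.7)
The plan is to use \thmref{thmX:cat_spectrum} to manufacture a discrepancy in negative K-theory. Since the connective theorem of the heart already implies $\K^{\mrm{cn}}(\sD^b(\sC^\heart)) \simeq \K^{\mrm{cn}}(\sC)$ for any bounded t-structure, the map $u$ is automatically an equivalence on connective covers, and any counterexample must be invisible on $\pi_{\geq 0}$ and detectable purely on $\pi_{<0}$.

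I would first fix an abelian category $\sA$ whose derived K-theory is easy to read off, for example $\sA = \mrm{Mod}_k^{\mrm{fd}}$ for a field $k$, so that $\K(\sD^b(\sA)) \simeq \K(k)$ vanishes in negative degrees. The goal then becomes to build a small stable $\infty$-category $\sC$ carrying a bounded t-structure with $\sC^\heart \simeq \sA$ and with $\K_{-1}(\sC) \neq 0$. By \thmref{thmX:cat_spectrum}, for any spectrum $M$ with $\pi_{-1}M \neq 0$ there is a small stable $\sC_M$ with $\K(\sC_M) \simeq M$; the real challenge is to marry such a $\sC_M$ with a bounded t-structure whose heart is $\sA$.

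The concrete plan is to construct $\sC$ as a stable $\infty$-category fitting into a fiber sequence of small stable $\infty$-categories
\[
\sD^b(\sA) \too \sC \too \sC_M,
\]
chosen so that (i) the bounded t-structure on $\sD^b(\sA)$ extends to $\sC$ with the same heart $\sA$, and (ii) the induced cofiber sequence in K-theory
\[
\K(\sD^b(\sA)) \too \K(\sC) \too \K(\sC_M)
\]
transports the negative homotopy of $\K(\sC_M)$ into $\K(\sC)$. The functoriality of the construction $M \mapsto \sC_M$, together with the categorifications of cofibers and loops (\propref{prop:categoifying_cofiber} and \thmref{thm:categorifying_loops}), are exactly the tools that should let us fit $\sC_M$ into $\sC$ in a compatible manner; $u$ then factors as the canonical inclusion of the left-hand term of the above cofiber sequence.

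The main obstacle will be step (i): arranging the contribution from $\sC_M$ so that it adjoins no nonzero objects to the heart of $\sC$, thereby preventing the heart from becoming strictly larger than $\sA$. This is a delicate balancing act, because $\sC_M$ must simultaneously be placed away from degree zero in the t-structure \emph{and} retain nontrivial negative K-theory. The flexibility afforded by \thmref{thmX:cat_spectrum} --- in particular, its functoriality and the ability to categorify shifts and cofibers at will --- suggests that by iteratively suspending and glueing one can sweep the objects of $\sC_M$ out of the heart of $\sC$ while preserving enough of its negative K-theory to force $u$ to fail to be an equivalence on $\pi_{-1}$.
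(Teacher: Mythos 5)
Your high-level idea---leverage \thmref{thm:cat_spectrum} to inject some pathological negative K-theory into a stable $\infty$-category with a bounded t-structure---is indeed the starting point of the paper's argument. However, your concrete plan diverges from the paper's and contains both a gap and a bug.

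\textbf{The bug.} You propose to detect the failure of $u$ on $\pi_{-1}$, i.e.\ to find $\sC$ with a bounded t-structure and $\K_{-1}(\sC)\neq 0$. This is impossible: Antieau--Gepner--Heller already proved that $\pi_{-1}\K(\sC)=0$ for \emph{every} small stable $\infty$-category with a bounded t-structure (see \remref{rem:history_heart}). So your detection mechanism must move at least to $\pi_{-2}$, and even then it is not automatic that a failure appears, which is why Neeman's example of a bounded t-structure with $\pi_{-2}\K(\sC)\neq 0$ did not by itself disprove \conjref{conj:thmoftheheart}.

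\textbf{The gap.} Your plan hinges on constructing $\sC$ as the middle term of a Verdier sequence $\sD^b(\sA)\to\sC\to\sC_M$ with a bounded t-structure whose heart is exactly $\sA$ and which restricts to the standard t-structure on $\sD^b(\sA)$. You correctly flag this as ``the main obstacle'' and then only gesture at it (``iteratively suspending and glueing\dots suggests'') without a construction. Nothing in \propref{prop:categoifying_cofiber} or \thmref{thm:categorifying_loops} produces t-structures, let alone t-structures with a controlled heart; they are purely K-theoretic/categorical glueing devices. Producing a bounded t-structure on an arbitrary Verdier extension with a prescribed heart is genuinely delicate, and it is exactly this difficulty that the paper engineers its way around.

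\textbf{How the paper avoids both issues.} Instead of prescribing a heart, the paper takes an arbitrary $\sC_M$ from \thmref{thm:cat_spectrum} and forms $\sA c(\sC_M)$, the kernel of the counit $\sC_M^{\mrm{fin}}\to\sC_M$. By \thmref{thm:acyclic_tstructure}, $\sA c(\sC_M)$ \emph{always} carries a bounded t-structure (with whatever heart it happens to have---roughly a Freyd envelope, cf.~\remref{rem:bounded_objects}). The detection mechanism is not a specific negative homotopy group but $\K(\Z)$-locality: by \thmref{thm:weight_local} the spectrum $\K(\sC_M^\mrm{fin})$ is $\K(\Z)$-local, while $\K(\sD^b(A))$ is $\K(\Z)$-local for any abelian $A$ by \propref{prop:locality_tstructure}. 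Choosing $M=K(2)$, a non-$\K(\Z)$-local spectrum by Mitchell's theorem (\examref{exam:nonlocal-Morava}), forces $\K(\sA c(\sC_M))$ to fail $\K(\Z)$-locality via the fiber sequence $\K(\sA c(\sC_M))\to\K(\sC_M^\mrm{fin})\to M$, which contradicts \corref{cor:locality_tstructure}. The $\K(\Z)$-locality argument is precisely what lets the paper sidestep both the construction of a t-structure with prescribed heart and the degree-by-degree analysis on $\pi_{-n}$ that your plan would require.
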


The construction of a counterexample is
surprisingly easy, once we have Theorem~\ref{thmX:cat_spectrum}. We start 
by building a stable $\infty$-category $\sC$ whose K-theory has sufficiently nontrivial {\it chromatic} behaviour. 
Given such an $\infty$-category $\sC$ we consider a new small stable $\infty$-category $\sA c(\sC)$ that admits a 
bounded t-structure\footnote{This construction is an adaptation of the construction of Neeman 
\cite[Section~1]{Neeman2021} to the more 
general setting.} 
(see Section~\ref{sec:toh_is_false}), and whose K-theory spectrum happens to be chromatically similar to $\K(\sC)$. 
This immediately yields a counterexample, as the chromatic behaviour of $\K(\sD^b(\sC^\heartsuit))$ is always very 
simple. 

Using the same example we also show that the nonconnective K-theory of a stable 
$\infty$-category with a bounded t-structure is not $\mathbb{A}^n$-invariant (Remark~\ref{rem:K_notA1invariant}). 

\ssec{Notation}

\begin{itemize}
\item We will denote by $\sS$ the $\infty$-category of spaces and by $\Spt$ -- the $\infty$-category of spectra. 
\item We will denote by $\Cat^{\mrm{st}}_{\infty}$ the $\infty$-category of small stable $\infty$-categories and 
by $\Cat^{\perf}_{\infty}$ the subcategory of idempotent complete $\infty$-categories. 
For $\sC, \sD \in \Cat^{\mrm{st}}_{\infty}$ we denote by $\Funex(\sC,\sD)$ the 
$\infty$-category of (left) exact functors from $\sC$ to $\sD$. The $\infty$-category of presentable stable 
$\infty$-categories is denoted by $\PrLst$.
\item For $\infty$-categories with products 
$\sC, \sD$ we denote by $\Fun_\times(\sC,\sD)$ the $\infty$-category of product-preserving 
functors from $\sC$ to $\sD$. We will denote by $\Cat^{\mrm{padd}}_{\infty}$ the $\infty$-category of small additive $\infty$-categories which are idempotent complete. 
\item Given a stable $\infty$-category $\sC$ and $X,Y \in \sC$ we denote by $\map_\sC(X,Y)$ the  
corresponding mapping spectrum. 
\item We use the notation $\K(-)$ for the {\bf nonconnective} K-theory spectrum. The connective K-theory spectrum, as 
defined in \cite{BarwickKtheory}, is denoted with $\K^{\mrm{cn}}(-)$. Note that $\K^{\mrm{cn}}(-)$ is not invariant under idempotent completion.
\end{itemize}

\ssec{Acknowledgements}

We want to thank Benjamin Antieau, Denis-Charles Cisinski, Sasha Efimov, Niklas Kipp, Ishan Levy and Marco Varisco for useful 
conversations and their comments on an early draft of the paper. 
The first author is supported by the Danish National Research Foundation through the Copenhagen Centre for Geometry and Topology (DNRF151).
The second and third author are supported by the SFB 1085 ``Higher Invariants'' funded by the Deutsche Forschungsgesellschaft (DFG). 

\section{Reminder on localizing invariants}\label{sec:noncommutative_motives}

In our exposition we mostly follow the terminology of \cite{blumberg2013universal} and \cite{HSS}. Note, however, 
that we consider a 
slightly more general notion of a localizing invariant (as in \cite{Land_2019} or \cite{BKRS}).

\begin{defn}\label{defn:KV_sequence}
We say that a diagram in $\Cat^{\mrm{st}}_{\infty}$
\[
\sA \stackrel{f}\to \sB \stackrel{g}\to \sC
\]
is a {\bf Karoubi--Verdier sequence} 
if 
$f$ is fully faithful, the composite $g\circ f$ is trivial and the induced functor 
\[
\sB/\sA \to \sC
\]
is an equivalence up to idempotent completion. 

We say that it is a {\bf Verdier sequence} if $g$ is essentially 
surjective and the essential image of $f$ is closed under retracts. 
\end{defn}

\begin{defn}\label{defn:localizing}
Let $\sE$ be a stable $\infty$-category. A functor 
\[
E: \Cat^{\perf}_{\infty} \to \sE
\]
is said to be a {\bf localizing invariant} if for any Karoubi-Verdier sequence $\sA \to \sB \to \sC$ in 
$\Cat^{\perf}_{\infty}$ the sequence
\[
E(\sA) \to E(\sB) \to E(\sC)
\]
is a fiber sequence.
\end{defn}

\begin{defn}\label{defn:finitary}
Let $\sE$ be a cocomplete stable $\infty$-category. We say that a functor 
\[
E: \Cat^{\perf}_{\infty} \to \sE
\]
is {\bf finitary} if it preserves filtered colimits. For a cocomplete stable $\infty$-category $\sD$ we denote the subcategory of $\Fun(\Cat^{\perf}_{\infty}, \sD)$ consisting of finitary localizing invariants 
by $\Fun^{\mrm{loc}, \mrm{fin}}(\mrm{Cat}^{\perf}_{\infty}, \sD)$.
\end{defn}

\begin{exam}
The nonconnective K-theory functor $\K : \Cat^\perf_\infty \to \Spt$ is a finitary localizing invariant. 
\end{exam}

\begin{thm}[{Blumberg, Gepner \& Tabuada \cite[Theorem~8.7]{blumberg2013universal}}]\label{thm:universal_property_motives}
There exists a functor 
\[
\mathcal{U}_{\mrm{loc}}: \Cat^\perf_\infty \to \Motnc
\]
initial among finitary localizing invariants with values in a presentable stable $\infty$-category. 
\end{thm}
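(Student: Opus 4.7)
The plan is to realise $\Motnc$ as an accessible Bousfield localisation of an auxiliary presentable stable $\infty$-category of ``free'' finitary functors, with the localising relations enforcing the Karoubi--Verdier sequence axiom.

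First I would exploit that $\Cat^\perf_\infty$ is compactly generated: writing $\sC^\omega$ for its full subcategory of compact objects, one has $\Cat^\perf_\infty \simeq \Ind(\sC^\omega)$. Since we only care about finitary targets, the restriction functor
\[
\Fun^{\mrm{fin}}(\Cat^\perf_\infty, \sD) \to \Fun(\sC^\omega, \sD)
\]
is an equivalence for any $\sD \in \PrLst$. Next, by the universal property of spectral presheaves, every functor $\sC^\omega \to \sD$ extends uniquely to a colimit-preserving functor from $\sP_\Spt(\sC^\omega) := \Fun((\sC^\omega)^{\op}, \Spt)$ to $\sD$, and this extension passes through the stabilisation/spectrification freely. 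Thus the forgetful functor $\Fun^L(\sP_\Spt(\sC^\omega), \sD) \to \Fun^{\mrm{fin}}(\Cat^\perf_\infty, \sD)$ is an equivalence.

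Second, I would identify a small set $W$ of morphisms in $\sP_\Spt(\sC^\omega)$ whose inversion matches the localising-invariant axiom. For each Karoubi--Verdier sequence $\sA \to \sB \to \sC$ with $\sA,\sB,\sC \in \sC^\omega$ (a small set up to equivalence), add to $W$ the canonical map
\[
\Cofib\bigl(\yo(\sA) \to \yo(\sB)\bigr) \to \yo(\sC).
\]
Because any Karoubi--Verdier sequence in $\Cat^\perf_\infty$ is a filtered colimit of such sequences between compact objects, inverting $W$ forces \emph{every} Karoubi--Verdier sequence to go to a cofiber sequence once one extends finitarily to $\Cat^\perf_\infty$. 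Since $W$ is small, the accessible Bousfield localisation $L_W \sP_\Spt(\sC^\omega) =: \Motnc$ exists and is presentable stable, and we define $\mathcal{U}_{\mrm{loc}}$ to be the composite
\[
\Cat^\perf_\infty \to \sP_\Spt(\sC^\omega) \to \Motnc,
\]
the first arrow being the finitary extension of the spectral Yoneda embedding.

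The universal property then follows by chaining equivalences: for $\sD \in \PrLst$,
\[
\Fun^L(\Motnc, \sD) \simeq \Fun^L_W(\sP_\Spt(\sC^\omega), \sD) \simeq \Fun^{\mrm{loc},\mrm{fin}}(\Cat^\perf_\infty, \sD),
\]
where the second equivalence is the content of the paragraphs above combined with the definition of $W$. The main technical obstacle, in my view, is justifying the second equivalence: one must check that a finitary functor $E \colon \Cat^\perf_\infty \to \sD$ sends all Karoubi--Verdier sequences to fiber sequences iff its unique colimit-preserving extension $\tilde E \colon \sP_\Spt(\sC^\omega) \to \sD$ inverts $W$. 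The ``only if'' direction is formal from Yoneda; the ``if'' direction uses that every Karoubi--Verdier sequence in $\Cat^\perf_\infty$ is an Ind-colimit of Karoubi--Verdier sequences in $\sC^\omega$ (so that finitariness reduces the check to compact inputs), together with the stability of fiber sequences under filtered colimits in $\sD$. Once this identification is in hand, the theorem is immediate.
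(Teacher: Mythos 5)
The paper does not prove this result: it is stated as a recollection and attributed to Blumberg--Gepner--Tabuada \cite[Theorem~8.7]{blumberg2013universal}. There is therefore no ``paper's proof'' to compare against; what follows is an assessment of your proposal on its own terms, measured loosely against the original BGT construction.

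Your overall framework is the right one and is in the spirit of BGT: realize $\Motnc$ as an accessible Bousfield localization of a presentable stable presheaf category, with the localizing relations encoding the Karoubi--Verdier axiom. The first two reductions are sound: finitary functors out of $\Cat^\perf_\infty \simeq \Ind(\sC^\omega)$ are arbitrary functors out of $\sC^\omega$, and arbitrary functors from a small $\infty$-category $\sC^\omega$ into a presentable stable $\sD$ correspond to colimit-preserving functors from $\sP_\Spt(\sC^\omega)$.

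The gap is in the step you yourself single out as the ``main technical obstacle,'' and it is real. You assert that every Karoubi--Verdier sequence in $\Cat^\perf_\infty$ is a filtered colimit of Karoubi--Verdier sequences between compact objects, and you use this for the ``if'' direction. This claim is not justified, and it is not routine. Writing $\sB = \colim_i \sB_i$ with each $\sB_i$ compact and setting $\sA_i = \sA \times_\sB \sB_i$ does not obviously produce compact $\sA_i$; more generally, there is no a priori reason that the full subcategory of Karoubi--Verdier sequences inside $\Fun(\Delta^2, \Cat^\perf_\infty)$, while closed under filtered colimits, has its compact objects given by sequences whose three terms are individually compact in $\Cat^\perf_\infty$. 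Without this, the set $W$ you wrote down may be too small, and the equivalence $\Fun^L_W(\sP_\Spt(\sC^\omega), \sD) \simeq \Fun^{\mrm{loc},\mrm{fin}}(\Cat^\perf_\infty, \sD)$ is not established. This is exactly where the real content of BGT's proof (and of later treatments such as Hoyois--Scherotzke--Sibilla) lies: one must either prove an approximation statement of this sort, or choose the generating set of localizing morphisms more carefully (e.g.\ by an accessibility argument on the $\infty$-category of Karoubi--Verdier sequences itself) and then verify the universal property directly. Until one of these is done, the argument is a plausible outline rather than a proof.
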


The $\infty$-category $\Motnc$ is naturally presentably symmetric monoidal and the functor 
$\mathcal{U}_{\mrm{loc}}$ admits a symmetric monoidal enhancement upon considering the symmetric monoidal 
structure of \cite[4.1.2]{BFN} on $\Cat^\perf_\infty$ (see also \cite[Section~3.1]{blumberg2013universal}). 
This structure allows us to identify the nonconnective K-theory naturally in this setting:

\begin{thm}[{Blumberg, Gepner \& Tabuada \cite[Theorem~9.8]{blumberg2013universal}}]\label{thm:universality_Ktheory}
The unit $\unitm \in \Motnc$ is compact and for any stable $\infty$-category $\sC$ there is a natural 
equivalence 
\[
\map_{\Motnc}(\unitm, \mathcal{U}_{\mrm{loc}}(\sC)) \simeq \K(\sC).
\]
\end{thm}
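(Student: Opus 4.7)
The plan is to use the universal property of Theorem~\ref{thm:universal_property_motives} together with the symmetric monoidal enhancement of $\mathcal{U}_{\mrm{loc}}$ to identify the functor $E(\sC) := \map_{\Motnc}(\unitm, \mathcal{U}_{\mrm{loc}}(\sC))$ with nonconnective K-theory. Note that the two conclusions are intertwined, since $E$ is finitary precisely when $\unitm$ is compact.

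First I would establish compactness of $\unitm$. Since $\mathcal{U}_{\mrm{loc}}$ is symmetric monoidal and $\Spt^\omega$ is the unit of the symmetric monoidal structure on $\Cat^\perf_\infty$, we have $\unitm \simeq \mathcal{U}_{\mrm{loc}}(\Spt^\omega)$. The stronger claim worth aiming for is that each $\mathcal{U}_{\mrm{loc}}(\sC)$ for small idempotent complete stable $\sC$ is compact in $\Motnc$. This should follow from the construction of $\Motnc$ as a Bousfield localization of finitary presheaves on a small skeleton of $\Cat^\perf_\infty$: since the local equivalences encoding Karoubi-Verdier exactness form a small set of maps between compact presheaves, the localization preserves compactness of the representables.

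With $\unitm$ compact, $E$ becomes a finitary localizing invariant, being the composite of $\mathcal{U}_{\mrm{loc}}$ with the exact colimit-preserving functor $\map_{\Motnc}(\unitm, -)$. By Theorem~\ref{thm:universal_property_motives}, both $E$ and $\K$ then extend uniquely to colimit-preserving functors $\widetilde{E}, \widetilde{\K}: \Motnc \to \Spt$. I would next upgrade both to lax symmetric monoidal functors: $\widetilde{E}$ inherits such a structure canonically from $\map(\unitm, -)$, while $\widetilde{\K}$ acquires one by promoting $\K$ to a lax symmetric monoidal functor via the multiplicative structure on K-theory and invoking the symmetric monoidal version of the universal property. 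The lax unit maps then identify both $\widetilde{E}(\unitm)$ and $\widetilde{\K}(\unitm)$ with $\sphere$, using Barratt--Priddy--Quillen to evaluate $\K(\Spt^\omega) \simeq \sphere$. Propagating this agreement via the $\Spt$-linear structure of both functors, combined with the fact that $\mathcal{U}_{\mrm{loc}}$ generates $\Motnc$ under colimits, yields the desired natural equivalence.

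The main obstacle is establishing compactness of $\unitm$: this requires carefully unpacking the explicit construction of $\Motnc$ and verifying that the relevant Bousfield localization preserves compactness of representables, which is the hardest technical input. The subsequent identification of $E$ with $\K$ is more formal, though the precise construction of the comparison natural transformation still requires coherently invoking the multiplicative structure on K-theory.
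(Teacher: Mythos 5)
The paper does not prove this theorem itself; it is quoted directly from Blumberg--Gepner--Tabuada, so the relevant comparison is with their proof of Theorem~9.8 of \cite{blumberg2013universal}.

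Your treatment of compactness is roughly on the right track but elides a real point: a Bousfield localization at a small set of maps between compact objects preserves compactness of representables only if the localization is $\omega$-accessible (equivalently, finitary), i.e.\ the class of local objects is closed under filtered colimits. This does hold here, and is essentially BGT's argument, but your phrasing makes it sound automatic when it is the substance of the lemma.

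The serious gap is in the identification of $E(\sC) := \map_{\Motnc}(\unitm,\mathcal{U}_{\mrm{loc}}(\sC))$ with $\K(\sC)$. Two problems. First, the factual claim $\K(\Spt^\omega)\simeq \sphere$ by Barratt--Priddy--Quillen is false: $\K(\Spt^\omega) = \K(\sphere)$ is Waldhausen's $A(*)$, whose homotopy groups are vastly more complicated than those of $\sphere$; BPQ computes the K-theory of finite sets, not of $\mbf{Perf}_\sphere$. Second and more fundamentally, even if one knew that $\widetilde E$ and $\widetilde\K$ agreed on $\unitm$ (which one does not a priori, since $\End_{\Motnc}(\unitm)$ is not computable without the theorem), knowing that two colimit-preserving lax symmetric monoidal functors $\Motnc\to\Spt$ agree on the monoidal unit does not force them to agree on all of $\Motnc$: the unit does not generate $\Motnc$ under colimits. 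Your appeal to ``$\mathcal{U}_{\mrm{loc}}$ generates $\Motnc$ under colimits'' is circular, since to use that generation one would have to already know $\widetilde E(\mathcal{U}_{\mrm{loc}}(\sC))\simeq\widetilde\K(\mathcal{U}_{\mrm{loc}}(\sC))$ for every $\sC$, which is precisely the statement of the theorem. BGT's actual argument is of a different nature: they first prove the additive corepresentability statement for connective K-theory (their Theorem~7.13), whose input is Waldhausen's $S_\bullet$ construction and the additivity theorem, and then relate $\Motadd$ to $\Motnc$ via the Calkin/suspension construction to pass to nonconnective K-theory. None of this appears in your sketch, and without it there is no mechanism to produce the required natural transformation, let alone to show it is an equivalence.
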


The following easy corollary of Theorem~\ref{thm:universality_Ktheory} will come in handy later in the paper.

\begin{cor}\label{cor:autoequivalencesKth}
Any natural equivalence $\K \to \K$ is either homotopic to $\id$ or $-\id$. 
\end{cor}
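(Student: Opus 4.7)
The plan is to identify the endomorphism spectrum of $\K$ via the universal property of motives together with the Yoneda lemma, and then reduce the statement to the classical computation $\pi_0 \K(\sphere) \cong \Z$.

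First, since $\K$ is a finitary localizing invariant and $\Fun^{\mrm{loc}, \mrm{fin}}(\Cat^\perf_\infty, \Spt)$ is a full subcategory of $\Fun(\Cat^\perf_\infty, \Spt)$, the mapping spectrum $\map(\K, \K)$ agrees whether computed in the subcategory or in the ambient functor $\infty$-category. By \thmref{thm:universal_property_motives}, this subcategory is equivalent to $\Fun^L(\Motnc, \Spt)$, and under this equivalence \thmref{thm:universality_Ktheory} identifies $\K$ with the corepresentable functor $F := \map_{\Motnc}(\unitm, -)$, where $\unitm$ is compact in $\Motnc$. Applying the Yoneda lemma yields
\[
\map_{\Fun^L(\Motnc, \Spt)}(F, F) \simeq F(\unitm) \simeq \map_{\Motnc}(\unitm, \unitm) \simeq \K(\Spt^\omega) \simeq \K(\sphere),
\]
where the penultimate equivalence uses that the monoidal unit of $\Cat^\perf_\infty$ for the structure of \cite{BFN} is the $\infty$-category $\Spt^\omega$ of compact spectra.

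Passing to $\pi_0$, composition endows $\pi_0 \End(\K)$ with a ring structure, which matches the standard ring structure on $\pi_0 \K(\sphere) \cong \Z$ via the symmetric monoidal enhancement of $\mathcal{U}_{\mrm{loc}}$ recorded after \thmref{thm:universal_property_motives}. A natural equivalence of $\K$ thus corresponds (up to homotopy) to a unit in $\Z$, hence to $\pm 1$, realized respectively by $\id$ and $-\id$. The only point requiring care is the compatibility of the ring structures across the Yoneda identification, for which the symmetric monoidal refinement of the universal localizing invariant suffices.
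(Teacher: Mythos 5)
Your proof is correct and is essentially the paper's own argument, spelled out in more detail: the paper also identifies $\pi_0\mrm{Nat}(\K,\K) \simeq \pi_0\K(\sphere) \simeq \Z$ as rings via Theorems~\ref{thm:universal_property_motives} and \ref{thm:universality_Ktheory} and then observes that the only units of $\Z$ are $\pm 1$. Your unpacking of the Yoneda step and the care about ring-structure compatibility is exactly the content the paper leaves implicit.
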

\begin{proof}
By Theorems~\ref{thm:universal_property_motives}~and~\ref{thm:universality_Ktheory} we have an equivalence of rings 
\[
\pi_0\mrm{Nat}(\K,\K) \simeq \pi_0\K(\sphere) \simeq \Z.
\]
The result follows since the only units on the right-hand side are 1 and -1.
\end{proof}

\section{\texorpdfstring{Reminder on the Dundas--McCarthy theorem}{Reminder on the Dundas-McCarthy theorem}}
In this section, we remind the reader of a special case of a nonconnective variant of the Dundas--McCarthy theorem. 
To state this result, we need to introduce twisted endomorphism $\infty$-categories. 
\begin{defn}\label{defn:bimodule}
Let $\sC$ be a small stable $\infty$-category. By a $\sC$-{\bf bimodule} we simply mean an exact functor $\sC \to \Ind(\sC).$
\end{defn}

\begin{rem}
A $\sC$-bimodule $T$ gives rise to a colimit-preserving functor 
\[
T: \Ind(\sC) \to \Ind(\sC). 
\]
We often abuse the notation and denote the latter by the same letter.
\end{rem}

\begin{exam}\label{ex:tensor}
If $M$ is an $R$-bimodule for some ring spectrum $R$, $M \otimes_R - : \mbf{Perf}_R \to\Ind(\mbf{Perf}_R)$ defines 
a $\mbf{Perf}_R$-bimodule. 
\end{exam}

\begin{defn}[{\cite[Definition~II.1.4]{nikolaus-scholze}}]\label{defn:twistEnd}
Let $\sC$ be a small stable $\infty$-category, and $T: \sC\to \Ind(\sC)$ a $\sC$-bimodule. 
The $\infty$-category $\End(\sC;T)$ of {\bf twisted endomorphisms} is the lax equalizer of the 
Yoneda embedding and $T$:
\[
\begin{tikzcd}
\sC \arrow[r, "T", swap, shift right]\arrow[r, "\yo", shift left] & \Ind(\sC).
\end{tikzcd}
\]
In other words, $\End(\sC;T)$ is the pullback of the cospan 
\[
\begin{tikzcd}
& \Fun(\Delta^1,\Ind(\sC))\arrow[d, "{(s,t)}"]\\
\sC \arrow[r, "{(\yo,T)}"] & \Ind(\sC) \times \Ind(\sC).
\end{tikzcd}
\]
Its objects are pairs $(x, f: x\to Tx), x\in \sC$, and its morphisms are given by commutative squares. 
\end{defn}

\begin{rem}\label{rem:endsqz}
When $\sC= \mbf{Perf}_A$ and $T = \Sigma M\otimes_A-$ for $A$ a ring spectrum and $M$ an $A$-bimodule, there is a 
fully faithful embedding $\mbf{Perf}_{A\oplus M} \to \End(\mbf{Perf}_A; \Sigma M\otimes_A -)$ whose essential 
image consists of nilpotent twisted endomorphisms, i.e. objects $(P, P \to \Sigma M\otimes_A P)$
such that for $n>>0$, the composite 
\[
P\to \Sigma M\otimes_A P\to \Sigma^2 M^{\otimes_A 2}\otimes_A P\to\dots \to \Sigma^n M^{\otimes_A n}\otimes_A P
\] is null. In particular, if $A$ and $M$ are connective, this fully faithful embedding is an equivalence. We refer to 
\cite[Proposition 3.2.2]{raskin2018dundasgoodwilliemccarthy} 
for a sketch of proof, and to 
\cite[Theorem 2.34]{barkan2022explicit} for a more detailed proof. 
\end{rem}
We will use the above remark specifically when $A= \sphere$ and $M=\Sigma^n\sphere$ for some $n$.

\begin{constr}
Let $E$ be a functor from $\Cat^\perf_\infty$ to a stable $\infty$-category, and let $T$ be a $\sC$-bimodule for some small 
(idempotent complete) stable $\infty$-category $\sC$. There is a 
retraction of $\End(\sC;T)$ onto $\sC$: the inclusion is given by sending $x$ to $(x, x\xrightarrow{0} Tx)$, and the 
retraction by sending $(x,x\to Tx)$ to $x$. 
We let $\widetilde E(C;T)$ denote the cofiber of the map $E(\sC) \to E(\End(\sC;T))$ induced by the inclusion. It is a direct summand of $E(\End(\sC;T))$, so it 
can be equivalently expressed as the fiber of the retraction. 

For $\sC = \Spt^\omega = \mbf{Perf}_{\mbf{S}}$ and $T = M \otimes -$, we will abbreviate $\widetilde{E}(\End(\Spt^\omega;M)) := \widetilde{E}(\End(\Spt^\omega;M \otimes -))$.
\end{constr}

We recall the following from \cite[Theorem~6.1.1.10, Example~6.1.1.28]{HA}:
\begin{lem}\label{lem:P1}
Let $F: \sC\to \sD$ be a pointed functor between stable $\infty$-categories, where $\sD$ admits sequential colimits. Its first 
Goodwillie derivative, $P_1F$, defined as the initial exact functor below $F$, is given by the formula
\[
\colim_n \Omega^n F(\Sigma^n -).
\]
In particular, the latter is exact. 
\end{lem}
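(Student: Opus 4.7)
The plan is to construct $P_1 F$ explicitly as the colimit $\colim_n \Omega^n F(\Sigma^n -)$ and verify both exactness and the universal property. First, I would introduce the endofunctor $T$ of $\Fun_*(\sC, \sD)$ defined by $TF(X) := \Omega F(\Sigma X)$, together with a natural transformation $\eta : \id \Rightarrow T$ whose component at $(F,X)$ is the comparison map $F(X) \to F(0) \times_{F(\Sigma X)} F(0) \simeq \Omega F(\Sigma X)$ obtained by applying $F$ to the defining pushout square for $\Sigma X$ (with corners $X, 0, 0, \Sigma X$) and comparing with the corresponding pullback. Iterating along $\eta$ produces a sequential diagram $F \to TF \to T^2 F \to \cdots$, and I would define $PF := \colim_n T^n F$, which exists pointwise by the assumption that $\sD$ admits sequential colimits.

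Next, I would show that $PF$ is exact. The main input is the standard criterion: a pointed functor $H : \sC \to \sD$ into a stable $\infty$-category is $1$-excisive (equivalently, exact) if and only if $\eta_H : H \to TH$ is an equivalence. Applied to $H = PF$, I would verify the latter directly: since $\sD$ is stable, $\Omega$ is an equivalence on $\sD$ and hence preserves sequential colimits, giving $\Omega(PF)(\Sigma X) = \colim_n \Omega^{n+1} F(\Sigma^{n+1} X)$, which is a shift of the colimit defining $PF(X)$; the comparison map is precisely this shift-equivalence.

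For the universal property, suppose $G : \sC \to \sD$ is exact and comes equipped with a map $F \to G$. The criterion gives $G \simeq TG$, and iterating shows $G \simeq \colim T^n G = PG$. Functoriality of $P$ then yields the factorization $F \to PF \to PG \simeq G$. Uniqueness follows from the fact that exact functors are precisely the $\eta$-local objects in $\Fun_*(\sC, \sD)$ and that $F \to PF$ is a transfinite composite of pointwise $\eta$-equivalences, so $\eta_F : F \to PF$ exhibits $PF$ as the unit of a Bousfield-style localization onto $\Funex(\sC, \sD)$.

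The main technical obstacle is the criterion invoked in the second step. The forward implication is immediate: exactness applied to the defining pushout for $\Sigma X$ yields the equivalence $H(X) \simeq \Omega H(\Sigma X)$. The converse---that preserving only the suspension pushouts forces $H$ to preserve all cofiber sequences---requires a propagation argument via the Puppe sequence together with stability of $\sD$ (which, in particular, gives $H(\Sigma X) \simeq \Sigma H(X)$ and lets one inductively reconstruct general cofibers from suspensions). This is precisely \cite[Theorem~6.1.1.10 and Example~6.1.1.28]{HA}, which is where the lemma is cited from.
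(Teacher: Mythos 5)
Your overall plan is sound and, like the paper, ultimately rests on the citation to Lurie (the paper in fact gives no proof at all — it simply records the statement as a recollection of \cite[Theorem~6.1.1.10, Example~6.1.1.28]{HA}). Your cofinality argument that $\eta_{PF}$ is an equivalence, and the derivation of the universal property from there, are both fine.

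However, your account of where the real work lies is off in a way worth flagging. You state the criterion ``$H$ is exact iff $\eta_H\colon H\to TH$ is an equivalence'' and then claim the converse follows from ``a propagation argument via the Puppe sequence together with stability of $\sD$.'' That sketch does not actually go through. From $\eta_H$ being an equivalence you do get $H(\Sigma X)\simeq \Sigma H(X)$ and $H(\Omega X)\simeq\Omega H(X)$, so applying $H$ to a Puppe sequence $A\to B\to C\to\Sigma A\to\cdots$ produces a sequence with the right \emph{shape} — but nothing in this forces $H(A)\to H(B)\to H(C)$ to be a (co)fiber sequence. Equivalently, one would need $H$ to be additive, and $\eta_H$ being an equivalence gives no direct control on $H(B\oplus C)$ versus $H(B)\oplus H(C)$. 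The implication ``$\eta_H$ iso $\Rightarrow$ $H$ excisive'' is in fact a \emph{consequence} of \cite[Theorem~6.1.1.10]{HA} (under the hypothesis that sequential colimits exist and commute with finite limits in $\sD$, which holds since $\sD$ is stable): one writes $H=\colim_n T^nH$ and invokes the theorem that this colimit is excisive. Lurie's proof of that theorem is a genuine inductive argument with strongly cocartesian cubes, not a Puppe-sequence bootstrap. Since you cite \cite{HA} for this step anyway, your proof is not broken, but a reader following your sketch would get stuck if they tried to fill in the converse as you describe it; it would be cleaner either to cite the excisiveness of $\colim_n T^n F$ directly (as the paper does) or to correctly identify that step as the nontrivial input rather than presenting it as an elementary propagation.
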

In this language, the main result of this section is:
\begin{thm}\label{thm:DM}
The functor $P_1\widetilde \K(\Spt^\omega; -) : \Spt\simeq\Fun^{\LL}(\Spt,\Spt)\to \Spt$ is equivalent to the identity, i.e. we have a natural equivalence $$M\simeq \colim_n \Omega^n\widetilde \K(\Spt^\omega;\Sigma^n M)$$
\end{thm}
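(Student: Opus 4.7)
The plan is to identify $P_1 F$ as an endofunctor of $\Spt$, where $F(M):=\widetilde{\K}(\Spt^\omega;M\otimes-)$, by reducing to a single calculation on the sphere spectrum.

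First I would check that $F$ is reduced and preserves filtered colimits. Reducedness follows since $\End(\Spt^\omega;0\otimes-)\simeq\Spt^\omega$ gives $F(0)\simeq 0$. For the finitary property, the lax equalizer defining $\End$ in \defref{defn:twistEnd} commutes with filtered colimits in the bimodule variable, because any object $(x,f\colon x\to M\otimes x)$ with $x\in\Spt^\omega$ factors through a finite stage of a filtered colimit $M=\colim_i M_i$ by compactness of $x$ in $\Ind(\Spt^\omega)=\Spt$, and similarly for morphisms. Combined with $\K$ being a finitary localizing invariant, this gives finitariness of $F$. Applying \lemref{lem:P1}, $P_1 F$ is exact, and combined with filtered-colimit preservation it is colimit-preserving. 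Under the canonical equivalence $\Spt\simeq\Fun^{\LL}(\Spt,\Spt)$ sending a spectrum $X$ to $X\otimes-$, the functor $P_1 F$ is determined by the spectrum $X:=P_1 F(\sphere)=\colim_n\Omega^n F(\Sigma^n\sphere)$, so it suffices to produce a preferred equivalence $X\simeq\sphere$.

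The key input for this identification is \remref{rem:endsqz}: for $n\ge 1$ the bimodule $\Sigma^{n-1}\sphere$ is connective, so $\End(\Spt^\omega;\Sigma^n\sphere\otimes-)\simeq\mbf{Perf}_{\sphere\oplus\Sigma^{n-1}\sphere}$, the perfect modules over the trivial square-zero extension. Consequently $F(\Sigma^n\sphere)$ is the split fiber of $\K(\sphere\oplus\Sigma^{n-1}\sphere)\to\K(\sphere)$, and $X$ is then computed from the first Goodwillie derivative of the square-zero K-theory functor $N\mapsto\Fib(\K(\sphere\oplus N)\to\K(\sphere))$ on connective spectra, evaluated at $N=\sphere$. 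This derivative equals $\mathrm{THH}(\sphere;-)\simeq\mathrm{id}$, giving $X\simeq\sphere$. The sign ambiguity in the resulting natural equivalence $\mathrm{id}\simeq P_1F$ is fixed by \corref{cor:autoequivalencesKth} together with a check on $\pi_0\K(\sphere)=\mbf{Z}$.

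The main obstacle is the last computation: identifying the first Goodwillie derivative of square-zero K-theory on connective spectra. In the modern framework this can be treated either by explicit analysis at the level of $\mathrm{THH}$ via the trace map, or by bootstrapping from the classical Dundas-McCarthy theorem for connective ring spectra; all other parts of the argument are essentially formal, making use only of reducedness, finitariness, and the universal property of $\Spt$ as the free presentable stable $\infty$-category on one generator.
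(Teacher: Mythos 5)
Your proposal follows essentially the same route as the paper: exactness via \lemref{lem:P1}, finitariness of $\End(\Spt^\omega;-)$ by the compactness argument (the paper's \lemref{lem:endfilt}), reduction to the value at $\sphere$ using that colimit-preserving endofunctors of $\Spt$ are determined there, translation via \remref{rem:endsqz} to $\widetilde\K(\sphere\oplus\Sigma^{n-1}\sphere)$, and then the classical Dundas--McCarthy/Goodwillie identification. Two small points: the derivative of $N\mapsto\widetilde\K(\sphere\oplus N)$ is $\mathrm{THH}(\sphere;\Sigma-)$ rather than $\mathrm{THH}(\sphere;-)$, which cancels against the $\Sigma^{n-1}$ reindexing so the conclusion $X\simeq\sphere$ stands; and the appeal to \corref{cor:autoequivalencesKth} is superfluous, since the theorem asserts only the existence of an equivalence $P_1\widetilde\K(\Spt^\omega;-)\simeq\id$, which any identification $X\simeq\sphere$ provides.
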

\begin{rem}
This theorem is a special case of a more general theorem comparing $P_1\widetilde \K(\sC;-)$ and $\mathrm{THH}(\sC;-)$ 
for a fixed $\sC$; in fact, this comparison can be made natural in $\sC$, too. We make a few remarks about this 
more general statement: using trace theories following Nikolaus' talk in \cite{OWRTC}, one can give a completely 
natural and structured proof of this equivalence. Such a proof will appear in a forthcoming work of the first named 
author; in fact, using much less technology (but similar ideas), and for a fixed $\sC$, one can \emph{deduce} the 
general result from the special case $\sC=\Spt^\omega$. A different proof will also appear in a forthcoming work of 
Harpaz, Nikolaus and Saunier. Finally, we point out that a proof is sketched in 
\cite[Theorem 3.10.1]{raskin2018dundasgoodwilliemccarthy}. 

We also note that our proof here will in fact \emph{use} the original theorem of Dundas--McCarthy \cite{Dundas1994}, 
but the aforementioned approaches do not, and deduce it as a special case. 
\end{rem}
\begin{rem}
In the setting of Remark~\ref{rem:endsqz}, and for a simplicial ring $A$, the result is simply the Dundas--McCarthy theorem, proved in \cite{Dundas1994}.
Note that Dundas and McCarthy obtain $\mathrm{THH}(A,\Sigma M)$ rather than $\mathrm{THH}(A,M)$ precisely because of the $\Sigma$ appearing in Remark \ref{rem:endsqz}. 
\end{rem} 

\begin{lem}\label{lem:endfilt}
Let $\sC$ be a small stable $\infty$-category. The functor 
\[
\Fun_{\mrm{ex}}(\sC,\Ind(\sC))\to \Cat^\perf_\infty
\] 
\[
T\mapsto \End(\sC;T)
\]
preserves filtered colimits. 
\end{lem}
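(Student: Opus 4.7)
The plan is to show that the natural comparison functor $\Phi \colon \colim_i \End(\sC; T_i) \to \End(\sC; T)$ induced by a filtered colimit $T = \colim_i T_i$ in $\Funex(\sC, \Ind(\sC))$ is an equivalence of $\infty$-categories. Before this, I would verify that filtered colimits are well-behaved on both sides: in $\Funex(\sC, \Ind(\sC))$ they are computed pointwise in $\Ind(\sC)$, since filtered colimits are exact in $\Ind(\sC)$ so that a pointwise filtered colimit of exact functors is exact; and in $\Cat^\perf_\infty$ they agree with filtered colimits in $\Cat_\infty$, because stability and idempotent completeness are both preserved under filtered colimits (any object, finite diagram, or idempotent in the colimit is detected at some finite stage).

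The key input for the remainder is that every object $x \in \sC$ is compact in $\Ind(\sC)$, so $\Map_{\Ind(\sC)}(x, -)$ preserves filtered colimits. For essential surjectivity of $\Phi$, given $(x, f \colon x \to Tx) \in \End(\sC; T)$, the equivalence $Tx \simeq \colim_i T_i x$ combined with compactness of $x$ implies that $f$ factors as $x \to T_i x \to Tx$ for some $i$, providing a preimage $(x, f_i) \in \End(\sC; T_i)$.

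For fully faithfulness, I would unwind the pullback defining $\End(\sC; T)$ to describe the mapping space
\[
\Map_{\End(\sC; T)}((x, f), (y, g)) \simeq \Map_\sC(x, y) \times_{\Map_{\Ind(\sC)}(x, Ty) \times \Map_{\Ind(\sC)}(x, Ty)} \Map_{\Ind(\sC)}(x, Ty)^{\Delta^1},
\]
where the map from $\Map_\sC(x, y)$ sends $h$ to the pair $(g \circ h, Th \circ f)$. The naive formula for mapping spaces in the pullback a priori involves $\Map_{\Ind(\sC)}(Tx, Ty)$, but this factor cancels against the corresponding factor in $\Map_{\Fun(\Delta^1, \Ind(\sC))}(f, g)$, leaving only mapping spaces in $\Ind(\sC)$ out of objects of $\sC$. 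By compactness each such space is a filtered colimit over $i$ of the analogous spaces for the $T_i$, and since finite limits commute with filtered colimits in $\sS$, the mapping spaces in $\colim_i \End(\sC; T_i)$ and $\End(\sC; T)$ agree.

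The main obstacle is precisely the cancellation described in the previous paragraph: if $\Map_{\Ind(\sC)}(Tx, Ty)$ genuinely appeared in the final formula, the argument would fail, as $Tx$ is not in general compact in $\Ind(\sC)$. Once one recognises that a morphism in $\End(\sC; T)$ amounts to a map $h \colon x \to y$ in $\sC$ together with a homotopy $gh \simeq Thf$ in $\Map_{\Ind(\sC)}(x, Ty)$, the compatibility with filtered colimits in $T$ becomes a formal consequence of compactness of $x$ and $y$ in $\Ind(\sC)$.
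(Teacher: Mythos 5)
Your proof takes the same approach as the paper's: essential surjectivity follows from compactness of each $x\in\sC$ in $\Ind(\sC)$, and full faithfulness follows from the mapping-space formula for the lax equalizer, where you correctly identify that the relevant factor is $\Map_{\Ind(\sC)}(x,Ty)$ rather than $\Map_{\Ind(\sC)}(Tx,Ty)$, so that only compactness of $x$ (and not of $Tx$) is needed; the paper compresses this to ``a simple calculation on mapping spaces,'' which you have spelled out. One small caution on a side remark: your parenthetical justification that idempotent completeness is preserved under filtered colimits of $\infty$-categories because ``any idempotent in the colimit is detected at some finite stage'' is not correct---the walking idempotent is not an $\omega$-compact object of $\Cat_\infty$ (only $\omega_1$-compact), so an idempotent in a filtered colimit need not factor through a finite stage---but this is immaterial here, since once the assembly map from the $\Cat_\infty$-colimit to the idempotent complete $\End(\sC;T)$ is shown to be an equivalence, that colimit is a posteriori idempotent complete and therefore already computes the colimit in $\Cat^\perf_\infty$.
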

\begin{proof}
Let $T_\bullet: I\to \Fun_{\mrm{ex}}(\sC,\Ind(\sC))$ be a filtered diagram, let $T:=\colim_I T_i$ and consider the assembly map 
$\colim_I \End(\sC;T_i)\to \End(\sC;T)$. We first note that a simple calculation on mapping spaces proves that this is fully 
faithful. Now given $(x,x\to Tx)$ in the target, as $x\in \sC$ is compact in $\Ind(\sC)$, we find that the map 
$x\to Tx$ factors through $T_i x$ for some $i$, so that $(x,x\to Tx)$ is in the image of $\End(\sC;T_i)\to\End(\sC;T)$. 
\end{proof} 

\begin{proof}[Proof of Theorem \ref{thm:DM}]
By Lemma \ref{lem:P1} (or by definition), the functor $P_1\widetilde \K(\Spt^\omega,-):\Spt\to \Spt$ is an exact functor, and by Lemma \ref{lem:endfilt} together with the fact that $\K$-theory preserves filtered colimits, it is a filtered colimit-preserving functor. All in all, we are trying to identify a colimit-preserving functor $\Spt\to \Spt$. Such a functor is completely determined by the image of $\sphere$, in our situation, by $\colim_n\Omega^n \widetilde \K(\Spt^\omega;\Sigma^n\sphere)$. 

By Remark \ref{rem:endsqz}, this is equivalently given by $\colim_n\Omega^n \widetilde \K(\sphere\oplus\Sigma^{n-1}\sphere)$, and we simply claim this is the sphere spectrum. This is now essentially the main result of \cite{Dundas1994}. Strictly speaking, the authors in \cite{Dundas1994} only prove this for simplicial rings and simplicial 
bimodules, but the case of ring spectra is proved in \cite[Theorem 5.3.5.1]{dgm-local}, where they deduce it from that case by some 
connectivity arguments. 
Alternatively, one can use the methods of \cite{WaldK}, or one can compare $\sphere \oplus\Sigma^{n-1}\sphere$ and $\sphere[\Omega(\Sigma^n S^0)]$ (there is a highly connected map between the two) and reduce to the original results of Goodwillie \cite{GoodwillieK}.

In any case, it follows that in this case, $P_1\widetilde\K(\Spt^\omega; -) \simeq \id_{\Spt}$. 
\end{proof}

\section{Categorification of spectra}\label{}

The goal of this section is proving the following theorem.

\begin{thm}\label{thm:cat_spectrum}
Let $M$ be a spectrum. Then there exists a small idempotent complete stable $\infty$-category 
$\sC_M$ such that 
\[
\K(\sC_M) \simeq M.
\]
Moreover, the assignment $M \mapsto \sC_M$ is functorial.
\end{thm}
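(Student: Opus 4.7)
The strategy is to categorify each ingredient in the Dundas--McCarthy formula of \thmref{thm:DM}, which identifies $M$ with $\colim_n \Omega^n \widetilde \K(\Spt^\omega;\Sigma^n M)$. Reading from inside out, this formula is built from three operations on the spectral side: forming $\widetilde \K$ of a twisted endomorphism category, applying $\Omega^n$, and taking a filtered colimit. We already have the twisted endomorphism $\infty$-categories $\End(\Spt^\omega;\Sigma^n M\otimes-)$ available on the nose as in \defref{defn:twistEnd}, and these are visibly functorial in $M$.

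First I would package $\widetilde \K(\Spt^\omega;N)$ categorically. For each spectrum $N$ the category $\End(\Spt^\omega;N\otimes-)$ sits in a split retraction
\[
\Spt^\omega \longrightarrow \End(\Spt^\omega;N\otimes-) \longrightarrow \Spt^\omega,
\]
natural in $N$. Applying the categorification of cofibers (\propref{prop:categoifying_cofiber}), I obtain a small idempotent complete stable $\infty$-category $\widetilde{\sE}_N$, functorial in $N$, whose nonconnective K-theory is the cofiber $\widetilde \K(\Spt^\omega;N)$ of the inclusion. Substituting $N=\Sigma^n M$ gives a diagram $n\mapsto \widetilde{\sE}_{\Sigma^n M}$ in $\Cat^\perf_\infty$ functorial in $M$.

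Next I would categorify the loops. Applying \thmref{thm:categorifying_loops} iteratively $n$ times to $\widetilde{\sE}_{\Sigma^n M}$ produces a small stable $\infty$-category $\sF_n(M)$ with
\[
\K(\sF_n(M)) \simeq \Omega^n\, \widetilde \K(\Spt^\omega;\Sigma^n M),
\]
functorial in $M$. The assembly maps
\[
\Omega^n\, \widetilde \K(\Spt^\omega;\Sigma^n M) \longrightarrow \Omega^{n+1}\, \widetilde \K(\Spt^\omega;\Sigma^{n+1} M)
\]
inside the Goodwillie derivative must then be lifted to transition functors $\sF_n(M)\to \sF_{n+1}(M)$ in $\Cat^\perf_\infty$, at least up to the freedom allowed by the loops construction. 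Finally I define
\[
\sC_M := \colim_n \sF_n(M) \in \Cat^\perf_\infty.
\]
Since $\K$ is finitary and localizing, this computes $\colim_n \Omega^n \widetilde \K(\Spt^\omega;\Sigma^n M)\simeq M$ by \thmref{thm:DM}, and functoriality in $M$ is inherited step by step.

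The main obstacle is arranging genuine functoriality and coherent transition maps for the loops categorification: producing, for a fixed category, \emph{some} $\sD$ with $\K(\sD)\simeq \Omega\K(\sC)$ is already delicate, but here I need the construction to be natural enough in the input that the tower $\sF_\bullet(M)$ assembles into an actual filtered diagram inside $\Cat^\perf_\infty$, and not merely a diagram after applying $\K$. Assuming \thmref{thm:categorifying_loops} is stated with sufficient naturality (which I would use as a black box), the rest of the argument is a matter of pasting together categorical cofibers, twisted endomorphism categories, and a filtered colimit, with \thmref{thm:DM} supplying the final identification.
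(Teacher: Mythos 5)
Your overall strategy — invoke the Dundas--McCarthy identification (\thmref{thm:DM}), then categorify each ingredient using \propref{prop:categoifying_cofiber} and \thmref{thm:categorifying_loops}, then paste everything together — is indeed the one the paper follows. But there is a genuine gap at precisely the point you flag as ``the main obstacle,'' and the fix is not what you are guessing it to be.

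You hope that ``sufficient naturality'' of the loops categorification $\Gamma$ is enough to turn the tower $\sF_\bullet(M)$ into an honest diagram in $\Cat^\perf_\infty$. It is not. The issue is not functoriality of $\Gamma$: the transition maps in $\colim_n\Omega^n\widetilde\K(\Spt^\omega;\Sigma^n M)$ are built out of the \emph{unit} $\eta\colon \id\Rightarrow\Omega\Sigma$ (this is visible already in the formula of \lemref{lem:P1}). Knowing that $\Gamma$ is a functor $\Cat^\perf_\infty\to\Cat^\perf_\infty$ lets you apply it to exact functors; it does not produce a natural exact functor $\sC\to\Gamma\mrm{Calk}(\sC)$ whose effect on K-theory is $\eta_{\K(\sC)}$. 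Producing exactly such a functor is the content of \thmref{thm:unit_loopssuspension}, which is a substantial piece of work: it goes back into the binary-complex model of \cite{KasprowskiWinges} and, crucially, uses the rigidity statement \corref{cor:autoequivalencesKth} to pin down the induced self-equivalence of $\K$ up to sign. Without this, you cannot even write down a single transition functor $\sF_n(M)\to\sF_{n+1}(M)$, let alone a coherent tower. Your proposal never identifies this as a required ingredient, and the phrase ``up to the freedom allowed by the loops construction'' would not save you.

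Once \thmref{thm:unit_loopssuspension} is available, your plan of iterating $\Gamma$ and taking a filtered colimit $\colim_n\sF_n(M)$ in $\Cat^\perf_\infty$ would in principle compute the correct K-theory, since $\K$ is finitary. The paper instead factors each transition map as (unit equivalence) $\circ$ (another equivalence) $\circ$ (a general map) and applies \lemref{lem:seq_colim_cofibre}, which rewrites any sequential colimit with such factorizations as the \emph{cofiber of a single map between direct sums}. This lets them assemble one exact functor $F$ out of the categorified pieces $\phi_n,\psi_n,\beta_n$ (obtained from \thmref{thm:unit_loopssuspension} and two applications of \propref{prop:categoifying_cofiber}) and take $\sC_M:=\mrm{Cone}(F)$, never forming a filtered colimit of $\infty$-categories. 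The $\Sigma$'s appearing in the diagram defining $F$, which induce $-\id$ on K-theory, supply exactly the signs demanded by \lemref{lem:seq_colim_cofibre}. This is a packaging choice rather than a logical necessity, but it is cleaner and avoids coherence bookkeeping. The real gap in your argument is the missing categorification of the unit map; the cofiber-versus-colimit question is secondary.
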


\ssec{Reminder on the Calkin construction}

For a small stable $\infty$-category $\sC$ we denote by 
$\mrm{Calk}(\sC)$ the $\omega_1$-small Calkin category of $\sC$, i.e. the idempotent completion of the Verdier quotient of the Yoneda embedding
\[
\sC \to \Ind(\sC)^{\omega_1}.
\]
Here $(-)^{\omega_1}$ denotes the subcategory of $\omega_1$-compact objects. Note that 
$\Ind(\sC)^{\omega_1}$ and $\mrm{Calk}(\sC)$ are small $\infty$-categories.

\ssec{Categorifying suspensions and loops}

Observe that $\Ind(\sC)^{\omega_1}$ 
admits an Eilenberg swindle and, in particular, 
$\K(\Ind(\sC)^{\omega_1}) \simeq 0$.
This implies that $\mrm{Calk}$ categorifies the suspension:

\begin{prop}\label{prop:categorifying_suspensions}
Let $\sC$ be a small idempotent complete stable $\infty$-category. Then 
\[
\K(\mrm{Calk}(\sC)) \simeq \Sigma \K(\sC).
\]
\end{prop}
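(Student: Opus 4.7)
The plan is to apply the defining localizing property of $\K$-theory to the Karoubi--Verdier sequence built into the definition of $\mathrm{Calk}(\sC)$, and exploit the Eilenberg swindle on $\Ind(\sC)^{\omega_1}$.

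First I would unpack the definition of $\mathrm{Calk}(\sC)$ to produce a candidate Karoubi--Verdier sequence
\[
\sC \xrightarrow{\yo} \Ind(\sC)^{\omega_1} \to \mathrm{Calk}(\sC).
\]
The Yoneda embedding is fully faithful and lands inside the $\omega_1$-compact objects (in fact inside the compact objects). By construction, $\mathrm{Calk}(\sC)$ is the idempotent completion of the Verdier quotient $\Ind(\sC)^{\omega_1}/\sC$, so the composite is trivial and the induced functor $\Ind(\sC)^{\omega_1}/\sC \to \mathrm{Calk}(\sC)$ is an equivalence up to idempotent completion. This matches Definition~\ref{defn:KV_sequence} verbatim, so the displayed sequence is a Karoubi--Verdier sequence in $\Cat^{\perf}_\infty$.

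Since $\K$ is a finitary localizing invariant, applying it produces a fiber sequence of spectra
\[
\K(\sC) \to \K(\Ind(\sC)^{\omega_1}) \to \K(\mathrm{Calk}(\sC)).
\]
The middle term vanishes: $\Ind(\sC)^{\omega_1}$ is closed under countable coproducts inside $\Ind(\sC)$ (this is the point of passing to $\omega_1$-compact objects rather than compact ones), which provides an Eilenberg swindle $X \mapsto \bigoplus_{n \ge 0} X$ and hence kills any additive invariant. Therefore $\K(\Ind(\sC)^{\omega_1}) \simeq 0$, and the fiber sequence degenerates to the desired equivalence $\K(\mathrm{Calk}(\sC)) \simeq \Sigma \K(\sC)$.

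There is not really a hard step here; the only thing that needs a moment of care is checking that the Yoneda embedding factors through $\Ind(\sC)^{\omega_1}$ and is fully faithful, and that closure under countable coproducts genuinely supplies an Eilenberg swindle in the idempotent complete setting. Both are standard, and the rest is a direct application of the localization property.
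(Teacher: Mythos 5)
Your proof is correct and takes essentially the same route as the paper's (which is compressed into the sentence preceding the proposition): identify the Karoubi--Verdier sequence $\sC \to \Ind(\sC)^{\omega_1} \to \mrm{Calk}(\sC)$ built into the definition, apply the localizing property of $\K$, and kill the middle term by the Eilenberg swindle available since $\Ind(\sC)^{\omega_1}$ has countable coproducts. Only minor nitpick: finitariness of $\K$ is not needed here, only the localizing property.
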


Less trivially, it is also possible to categorify loops.

\begin{thm}\label{thm:categorifying_loops}
There exists a functor $\Gamma : \Cat^\perf_\infty \to \Cat^\perf_\infty$ 
and a canonical equivalence 
\[
\K(\Gamma \sC) \simeq \Omega \K(\sC).
\]
\end{thm}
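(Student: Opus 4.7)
The strategy is to produce, functorially in $\sC$, a Karoubi--Verdier sequence
\[
\Gamma\sC \to \sE(\sC) \to \sC
\]
in which the middle term $\sE(\sC)$ is flasque (admits an Eilenberg swindle, so $\K(\sE(\sC)) \simeq 0$). Applying the localizing invariant $\K$ then gives a natural fiber sequence $\K(\Gamma\sC) \to 0 \to \K(\sC)$, whence $\K(\Gamma\sC) \simeq \Omega\K(\sC)$; the functoriality of $\Gamma$ is inherited from that of $\sE$ and of the quotient functor. Conceptually, this dualizes Proposition~\ref{prop:categorifying_suspensions}: there, Calkin realizes $\sC$ as the fully faithful \emph{kernel} of a Karoubi--Verdier sequence with flasque middle term $\Ind(\sC)^{\omega_1}$ and cokernel $\mrm{Calk}(\sC)$; here, we need to realize $\sC$ as a Karoubi--Verdier \emph{quotient} of a flasque category.

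To construct $\sE(\sC)$, the plan is to take a suitable pullback in $\Cat^{\perf}_\infty$ built from $\Ind$-completions and iterates of the Calkin construction. The flasqueness is arranged to be inherited from an underlying $\Ind(-)^{\omega_1}$ factor via an Eilenberg swindle, while the quotient is identified with $\sC$ by means of the Karoubi--Verdier sequence $\sC \to \Ind(\sC)^{\omega_1} \to \mrm{Calk}(\sC)$ together with Proposition~\ref{prop:categorifying_suspensions}. The naturality in $\sC$ of each ingredient ($\sC \mapsto \Ind(\sC)^{\omega_1}$, $\sC \mapsto \mrm{Calk}(\sC)$, the Yoneda and quotient transformations) ensures that $\sE$, and hence $\Gamma$, is functorial.

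The hard part is the verification of the Karoubi--Verdier property for the map $\sE(\sC) \to \sC$: producing a functor is easy, but showing that it is essentially surjective up to idempotent completion, and that the essential image of its fully faithful kernel $\Gamma\sC$ is closed under retracts, requires a careful analysis of $\omega_1$-compact generators and of the interplay between the Yoneda embedding $\sC \to \Ind(\sC)^{\omega_1}$, the quotient producing $\mrm{Calk}(\sC)$, and the chosen pullback. Once this is established, the fiber sequence argument above finishes the proof; the resulting equivalence $\K(\Gamma\sC) \simeq \Omega\K(\sC)$ is then canonical in the sense that it is given by the boundary map of the Karoubi--Verdier sequence under $\K$.
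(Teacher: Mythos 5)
Your strategy takes a genuinely different route from the paper. You propose to realize $\sC$ functorially as the Karoubi--Verdier quotient of a flasque category $\sE(\sC)$ and then read off $\Omega\K(\sC) \simeq \K(\Gamma\sC)$ from the localization sequence. The paper instead imports the Kasprowski--Winges result (Theorem~\ref{thm:almost_categorifying_loops}) that $\Omega\K(\sC) \simeq \Cofib\bigl(\K(F^q\sC) \xrightarrow{\K(\Delta)} \K(B^q\sC)\bigr)$ for the binary (acyclic) complex categories, and then applies Proposition~\ref{prop:categoifying_cofiber} --- Tamme's lax pullback and $\odot$-construction --- to categorify that cofiber, setting $\Gamma\sC := \mrm{Cone}(\Delta_\sC)$. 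So the paper never produces a flasque category surjecting onto $\sC$; it produces a category whose K-theory is the \emph{cofiber} of a K-theory map that happens to compute $\Omega\K(\sC)$.

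However, your proposal has a real gap: you never actually construct $\sE(\sC)$. The suggestion to use ``a suitable pullback built from $\Ind$-completions and iterates of the Calkin construction'' does not pin down a category, and the naive candidates fail. For instance, applying Proposition~\ref{prop:categoifying_cofiber} to $\id_\sC$ gives a functorial factorization $\sC \hookrightarrow \sD' \to \sC$ with the split Verdier projection $\sD' \to \sC$ having flasque kernel $\Ind(\sC)^{\omega_1}$; but then $\K(\sD') \simeq \K(\Ind(\sC)^{\omega_1}) \oplus \K(\sC) \simeq \K(\sC)$, so $\sD'$ is not flasque and this does not yield loops. You explicitly defer ``the hard part'' --- verifying that some chosen $\sE(\sC) \to \sC$ is a Karoubi--Verdier quotient with flasque source --- but that hard part is essentially the whole theorem. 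Unlike the Calkin construction, which produces a flasque category \emph{containing} $\sC$, there is no obvious dual construction producing a flasque category with $\sC$ as a Karoubi--Verdier \emph{quotient}; the binary complex machinery of \cite{KasprowskiWinges} is precisely what fills that hole in the paper's argument. Without a concrete construction of $\sE(\sC)$ together with the verification you describe, the argument is a plan rather than a proof.
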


\sssec{} To prove Theorem~\ref{thm:categorifying_loops} we need to recall a few constructions 
from \cite{KasprowskiWinges}. 
Denote by $F\sC \subseteq \Fun(\mbf{N},\sC)$ the full subcategory of those filtered objects in $\sC$ which stabilize after finitely many steps, and let $F^q\sC \subseteq F\sC$ be the full subcategory of those filtered objects which stabilize at $0$.
 Sending an essentially finite filtration to its associated graded defines a functor $\mrm{gr} \colon F\sC \to \bigoplus_{\mbf{N}} \sC$.
 Define the $\infty$-categories $B\sC$, $B^t \sC$ and $B^q \sC$ of \emph{binary complexes}, \emph{binary top-acyclic  complexes} and \emph{binary acyclic complexes} by the following pullbacks:
 \[\begin{tikzcd}
  B\sC\ar[r, "\top"]\ar[d, "\bot"'] & F\sC\ar[d, "\mrm{gr}"] & & B^t\sC\ar[r, "\top"]\ar[d, "\bot"'] & F^q\sC\ar[d, "\mrm{gr}"] & & B^q\sC\ar[r, "\top"]\ar[d, "\bot"'] & F^q\sC\ar[d, "\mrm{gr}"] \\
  F\sC\ar[r, "\mrm{gr}"] & \bigoplus_{\mbf{N}} \sC & & F\sC\ar[r, "\mrm{gr}"] & \bigoplus_{\mbf{N}} \sC & & F^q\sC\ar[r, "\mrm{gr}"] & \bigoplus_{\mbf{N}} \sC
 \end{tikzcd}\]
 Note that we have natural fully faithful functors $B^q\sC \to B^t\sC \to B\sC$.
 The identity functors induce natural transformations $\Delta \colon F \Rightarrow B$ and $\Delta \colon F^q \Rightarrow B^q$.

Kasprowski and the third author have obtained the following result.

\begin{thm}[{\cite[Theorem~1.1]{KasprowskiWinges}}]\label{thm:almost_categorifying_loops}
We have a canonical equivalence 
\[
\Cofib(\K(F^q\sC) \stackrel{\K(\Delta)}\to \K(B^q\sC)) \simeq \Omega \K(\sC).
\]
\end{thm}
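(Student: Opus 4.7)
The plan is to lift Grayson's classical proof of his $K_1$-formula via binary complexes to a spectrum-level statement in the $\infty$-categorical setting. The starting point is Waldhausen additivity for small stable $\infty$-categories: since a length-$n$ filtration stabilizing at $0$ is the same data as an $n$-simplex in the Waldhausen $S_\bullet$-construction, additivity gives $\K(S_n\sC) \simeq \K(\sC)^n$, with the equivalence induced by the associated graded. Passing to filtered colimits over the stabilization length and using that $\K$ is finitary, this upgrades to a natural equivalence
\[
\mrm{gr}_*\colon \K(F^q\sC) \isoto \bigoplus_{\mbf{N}}\K(\sC).
\]

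Next, we would analyze $\K(B^q\sC)$. Since $B^q\sC$ is defined as a pullback of $\infty$-categories and $\K$ does not preserve such pullbacks in general, we cannot simply pull the pullback through $\K$. Instead, the plan is to produce a Karoubi--Verdier sequence
\[
F^q\sC \xrightarrow{\Delta} B^q\sC \to \sD,
\]
where $\sD$ is a small idempotent-complete stable $\infty$-category obtained as a \emph{difference construction} formed from the two comparison functors $\top,\bot : B^q\sC \to F^q\sC$ (both of which split $\Delta$). Informally, $\sD$ is built from pairs $(F_1,F_2)$ of parallel acyclic filtrations modulo the identification $(F,F)\sim 0$, and can be presented explicitly by taking the idempotent completion of a Verdier quotient in which $\Delta(F^q\sC)$ is killed. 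Once this sequence is established, the definition of a localizing invariant in \defref{defn:localizing} immediately yields a fiber sequence $\K(F^q\sC) \to \K(B^q\sC) \to \K(\sD)$.

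The final and hardest step is to identify $\K(\sD) \simeq \Omega\K(\sC)$. The guiding principle is that, up to the first reduction above, $\K(F^q\sC)$ already incorporates one delooping of $\K(\sC)$, so the binary construction together with subtraction along $\Delta$ should supply a second delooping. Concretely, one realizes $\sD$ as a Waldhausen-type category naturally indexed by pairs of $S_\bullet$-simplices sharing an associated graded, matching the iterated $S_\bullet S_\bullet$ model used in Waldhausen's delooping theorem $\K(\sC) \simeq \Omega|S_\bullet\sC|$; the subtracted pair contributes the extra loop.

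The main obstacle, and where the bulk of the argument must be invested, is precisely this last identification: transporting Grayson's essentially combinatorial ``binary acyclic complex implies one extra loop'' argument into the $\infty$-categorical framework while respecting the pullback definition of $B^q\sC$. Once the difference category $\sD$ is properly constructed and the Karoubi--Verdier property checked (which reduces to verifying essential surjectivity onto idempotent completion plus the identification of the fiber $\Delta(F^q\sC)$ up to retracts), the rest of the argument is formal.
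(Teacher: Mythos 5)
This statement is not proved in the paper at all: it is imported verbatim as Theorem~1.1 of Kasprowski--Winges, so there is no internal argument to compare yours against. Judged on its own terms, your proposal has a genuine gap at its structural core. The functor $\Delta\colon F^q\sC\to B^q\sC$ is \emph{not} fully faithful, so it cannot be the first leg of a Karoubi--Verdier sequence, and your ``difference category'' $\sD$ cannot be obtained by (idempotent-completing) a Verdier quotient that kills $\Delta(F^q\sC)$. Indeed, $B^q\sC\simeq F^q\sC\times_{\bigoplus_{\mbf{N}}\sC}F^q\sC$, so
\[
\Maps_{B^q\sC}(\Delta X,\Delta Y)\;\simeq\;\Maps_{F^q\sC}(X,Y)\times_{\Maps(\mrm{gr}X,\mrm{gr}Y)}\Maps_{F^q\sC}(X,Y),
\]
which agrees with $\Maps_{F^q\sC}(X,Y)$ only if $\mrm{gr}$ is $(-1)$-truncated on mapping spaces. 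This already fails for the filtrations $X\to 0\to 0\to\cdots$, where $\mrm{gr}$ induces the diagonal $\Maps(X,Y)\to\Maps(X,Y)\times\Maps(\Sigma X,\Sigma Y)$; since mapping spaces in a stable $\infty$-category are not discrete, $\Delta$ is not fully faithful. Consequently the localizing property of $\K$ does not produce your fiber sequence $\K(F^q\sC)\to\K(B^q\sC)\to\K(\sD)$: the fiber of $\K(B^q\sC)\to\K\bigl(B^q\sC/\langle\mrm{im}\,\Delta\rangle\bigr)$ is the K-theory of the thick closure of the image, not $\K(F^q\sC)$. This is precisely why the present paper needs \propref{prop:categoifying_cofiber} (Tamme's lax pullback) to categorify $\Cofib(\K(\Delta))$ for a general, non-fully-faithful exact functor.

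Even granting some category $\sD$ with $\K(\sD)\simeq\Cofib(\K(\Delta))$, the identification $\K(\sD)\simeq\Omega\K(\sC)$ --- which is the entire content of the theorem --- is only asserted by analogy. The heuristic that $\K(F^q\sC)$ ``already incorporates one delooping'' is also misleading: additivity gives $\K(F^q\sC)\simeq\bigoplus_{\mbf{N}}\K(\sC)$, an infinite direct sum, not a delooping; the loop in Waldhausen's theorem arises from realizing the simplicial direction of $S_\bullet$, which is absent here. The actual Kasprowski--Winges argument (following Grayson) rests on non-formal inputs your sketch does not engage with: the shortening/vanishing result that $\K(B\sC)\to\K(F\sC)$ is an equivalence on the non-acyclic variants, the fiber sequence relating $B^t/B^q$, $B/B^q$ and $F/F^q$, and the identification of $B^t\sC/B^q\sC$ with $\sC$ under a $\K_0$-vanishing hypothesis --- exactly the statements this paper imports in the proof of Theorem~\ref{thm:unit_loopssuspension}. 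As it stands, your proposal is a plan whose central construction does not exist and whose decisive step is not argued.
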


Now to deduce Theorem~\ref{thm:categorifying_loops} it suffices to show that one can categorify the 
cofiber of the map $\K(\Delta)$ in Theorem~\ref{thm:almost_categorifying_loops}. It turns 
out that even the following more general claim holds.

\begin{prop}\label{prop:categoifying_cofiber}
Let $F:\sC \to \sD$ be an exact functor of small 
stable $\infty$-categories.
Then there exist a fully faithful exact functor $G:\sC \to \sD'$ and an exact functor $P:\sD' \to \sD$ such that $P \circ G \simeq F$ and $P$ fits into a right-split Verdier sequence $\Ind(\sC)^{\omega_1} \to \sD' \stackrel{P}{\to} \sD$.
This factorization can be chosen functorially in $F$.

In particular, there exist a small 
stable $\infty$-category $\mrm{Cone}(F)$ and 
an exact functor $\sD \to \mrm{Cone}(F)$ that induces an equivalence 
\[
\Cofib(\K(\sC) \stackrel{\K(F)}\to \K(\sD)) \simeq 
\K(\mrm{Cone}(F)).
\]
\end{prop}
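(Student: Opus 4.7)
The plan is to realize $\sD'$ as a mapping-cylinder stable $\infty$-category that glues $\sD$ to $\Ind(\sC)^{\omega_1}$ along $F$. Write $\tilde F \colon \Ind(\sC)^{\omega_1} \to \Ind(\sD)$ for the restriction to $\omega_1$-compact objects of the Ind-extension of $F$, and $\iota_\sD \colon \sD \hookrightarrow \Ind(\sD)$ for the canonical inclusion. I would define $\sD'$ to be the pullback in $\Cat^{\mrm{st}}_{\infty}$
\[
\sD' := (\Ind(\sC)^{\omega_1} \times \sD) \times_{\Ind(\sD) \times \Ind(\sD)} \Fun(\Delta^1, \Ind(\sD)),
\]
where the left-hand map is $(\tilde F, \iota_\sD)$ and the right-hand map sends an arrow to its (target, source). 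Concretely, objects of $\sD'$ are triples $(X, Y, \psi \colon Y \to \tilde F(X))$ with $X \in \Ind(\sC)^{\omega_1}$ and $Y \in \sD$, and morphisms are the obvious commuting squares. Stability is inherited from the pullback; smallness and (after idempotent completion) idempotent completeness follow since the defining data is controlled by the small categories $\Ind(\sC)^{\omega_1}$, $\sD$ together with a single morphism in $\Ind(\sD)$.

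The four natural functors are then transparent: set $G(X) := (j(X), F(X), \mathrm{id}_{F(X)})$ using the Yoneda embedding $j \colon \sC \to \Ind(\sC)^{\omega_1}$; $P(X, Y, \psi) := Y$; $i(X) := (X, 0, 0)$; and the candidate section $s(Y) := (0, Y, 0)$. The identity $P \circ G = F$ is immediate. I would then verify by direct mapping-space computations that $G$, $i$, and $s$ are all fully faithful and that $s$ is right adjoint to $P$. For instance, $\Map_{\sD'}(G(X_1), G(X_2))$ collapses to $\Map_{\sC}(X_1, X_2)$ because in the defining pullback the $\sD$-component of a morphism is forced to agree with $\tilde F$ applied to the first component, while $\sD \hookrightarrow \Ind(\sD)$ being fully faithful eliminates the remaining coherence.

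Given these adjunction and fully-faithfulness facts, the Verdier-sequence assertion is essentially formal. Since $P$ admits the fully faithful right adjoint $s$, it is a Bousfield localization and in particular essentially surjective. Any object $(X, Y, \psi)$ with $Y \simeq 0$ admits a unique $\psi$, namely zero, and is thus equivalent to $i(X)$; this identifies $\ker(P)$ with the essential image of $i$, and the same argument shows this essential image is closed under retracts. Hence $\Ind(\sC)^{\omega_1} \xrightarrow{i} \sD' \xrightarrow{P} \sD$ is a right-split Verdier sequence in the sense of Definition~\ref{defn:KV_sequence}. The only mildly delicate point I anticipate is the bookkeeping around idempotent completeness and smallness in $\Cat^\perf_\infty$, which is handled by idempotent-completing $\sD'$ at the end without affecting any of the relevant functors or their K-theory. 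Functoriality of the whole construction in $F$ is built into the pullback description.

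The final clause follows formally. An Eilenberg swindle on $\Ind(\sC)^{\omega_1}$ gives $\K(\Ind(\sC)^{\omega_1}) \simeq 0$, so the localizing fiber sequence associated to $\Ind(\sC)^{\omega_1} \to \sD' \xrightarrow{P} \sD$ shows $\K(P) \colon \K(\sD') \xrightarrow{\sim} \K(\sD)$. Define $\mrm{Cone}(F)$ as the idempotent completion of the Verdier quotient $\sD'/G(\sC)$. Because $G$ is a fully faithful exact functor, one obtains a Karoubi--Verdier sequence $\sC \xrightarrow{G} \sD' \to \mrm{Cone}(F)$ and hence a fiber sequence $\K(\sC) \xrightarrow{\K(G)} \K(\sD') \to \K(\mrm{Cone}(F))$. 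Transporting along the equivalence $\K(P)$ identifies $\K(G)$ with $\K(F)$ since $P \circ G = F$, which is the required equivalence $\K(\mrm{Cone}(F)) \simeq \Cofib(\K(F))$.
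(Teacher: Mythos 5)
Your construction of $\sD'$ as the pullback $(\Ind(\sC)^{\omega_1}\times\sD)\times_{\Ind(\sD)\times\Ind(\sD)}\Fun(\Delta^1,\Ind(\sD))$ is precisely Tamme's lax pullback $\sD\overset{\to}{\times}_{\Ind(\sD)^{\omega_1}}\Ind(\sC)^{\omega_1}$ (the use of $\Ind(\sD)$ in place of $\Ind(\sD)^{\omega_1}$ is immaterial, as all the relevant mapping spectra agree), and $\mrm{Cone}(F)=\sD'/G(\sC)$ is exactly Tamme's $\odot$-construction; so this is the same argument as the paper's, with the only difference that you verify by hand the facts the paper imports from \cite{tamme-excision} (fully faithfulness of $G$, the fully faithful right adjoint $s$ to $P$, the identification of $\ker P$ with $\Ind(\sC)^{\omega_1}$, and the resulting splitting $\K(\sD')\simeq\K(\sD)\oplus\K(\Ind(\sC)^{\omega_1})$). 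That is a perfectly reasonable trade: your version is self-contained, while citing Tamme makes clear this is the pullback appearing in his excision theorem.
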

\begin{proof}
Recall the following construction by Georg Tamme \cite{tamme-excision}.
Consider a commutative square of stable $\infty$-categories
\[
\begin{tikzcd}
\sA \arrow[r,"f"]\arrow[d, "p"'] & \sB \arrow[d,"g"] \\
\sA' \arrow[r, "q"] & \sB'
\end{tikzcd}
\]
such that the map $\sA \to \sA' \times_{\sB'} \sB$ is fully faithful. 
Denote by $\sA' \stackrel{\to}\times_{\sB'} \sB$ the lax pullback, i.e. the pullback of the diagram
\[
\begin{tikzcd}
& \Fun(\Delta^1, \sB') \arrow[d, "{(s,t)}"]\\
\sA' \times \sB \arrow[r] & \sB' \times \sB'.
\end{tikzcd}
\]
There are projection functors from $\sA' \stackrel{\to}\times_{\sB'} \sB$ to $\sA$ and $\sB$ split by 
fully faithful inclusions in the opposite directions.
Under our assumptions, we also have a fully faithful functor $\sA \to \sA' \stackrel{\to}\times_{\sB'} \sB$ sending 
$a$ to $(p(a), f(a), qp(a) \simeq gf(a))$ 
and a Karoubi--Verdier sequence
\[
\sA \to \sA' \stackrel{\to}\times_{\sB'} \sB \to \sA' \odot^\sA_{\sB'} \sB.
\]
Moreover, the composite functor $\sA \to \sA' \stackrel{\to}\times_{\sB'} \sB \to \sB$ is equivalent to $f$.
It follows from \cite[Proposition~10]{tamme-excision} that we have an equivalence 
$\K(\sA' \stackrel{\to}\times_{\sB'} \sB) \to \K(\sA') \oplus \K(\sB)$ induced by the two projections. Using this and the fact that $\K(-)$ sends Karoubi--Verdier 
sequences to cofiber sequences, we obtain a canonical equivalence 
\[
\K( \sA' \odot^\sA_{\sB'} \sB)
\simeq 
\Cofib(\K(\sA) \to \K(\sA') \oplus \K(\sB)).
\]
Now for any exact functor $F:\sC \to \sD$ we can apply the above construction to the square 
\[
\begin{tikzcd}
\sC\arrow[d, "F"']\arrow[r] & \Ind(\sC)^{\omega_1}\arrow[d]\\
\sD\arrow[r] &\Ind(\sD)^{\omega_1}
\end{tikzcd}
\]
to obtain a factorization
\[ \sC \to \sD \stackrel{\to}\times_{\Ind(\sD)^{\omega_1}} \Ind(\sC)^{\omega_1} \to \sD \]
of $F$ into a fully faithful exact functor and a split Verdier projection with kernel $ \Ind(\sC)^{\omega_1}$.
Moreover, we have
\[
\K(\sD \odot^\sC_{\Ind(\sD)^{\omega_1}} \Ind(\sC)^{\omega_1}) \simeq \Cofib(\K(\sC) \to \K(\sD)).
\]
Hence we may take $\mrm{Cone}(F)$ to be $\sD \odot^\sC_{\Ind(\sD)^{\omega_1}} \Ind(\sC)^{\omega_1}$ and 
the functor $\sD \to \mrm{Cone}(F)$ to be the composite of 
\[
\sD \hookrightarrow \sD \stackrel{\to}\times_{\Ind(\sD)^{\omega_1}} \Ind(\sC)^{\omega_1} \to \sD \odot^\sC_{\Ind(\sD)^{\omega_1}} \Ind(\sC)^{\omega_1}.\qedhere
\]
\end{proof}

\sssec{} 
The next important step is to show that, along with the suspension and loops functors, the unit equivalence
\[
\eta: x \stackrel{\simeq}{\to} \Omega\Sigma x
\]
can be categorified. 

\begin{thm}\label{thm:unit_loopssuspension}
There exists a natural functor 
\[
\sC \to \Gamma\mrm{Calk}(\sC)
\]
that induces the unit equivalence 
$\K(\sC) \stackrel{\simeq}\to \Omega \Sigma \K(\sC)$
under the equivalences from Proposition~\ref{prop:categorifying_suspensions} and Theorem~\ref{thm:categorifying_loops}.
\end{thm}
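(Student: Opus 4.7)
The plan is to construct a natural functor $\Phi_\sC \colon \sC \to \Gamma\mrm{Calk}(\sC)$ via an Eilenberg swindle, and then identify the induced map on K-theory using \corref{cor:autoequivalencesKth}.

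\textbf{Construction.} For $c \in \sC$, set $E(c) := \bigoplus_{n \in \mbf{N}} c \in \Ind(\sC)^{\omega_1}$ and let $s_c \colon E(c) \to E(c)$ be the shift-up endomorphism; both are exact and functorial in $c$. The canonical cofiber sequence
\[
E(c) \xrightarrow{s_c} E(c) \to c
\]
in $\Ind(\sC)^{\omega_1}$ becomes, in $\mrm{Calk}(\sC)$, an equivalence $s_c \colon E(c) \to E(c)$, since $c$ vanishes there. I define an exact functor $\Psi_\sC \colon \sC \to B^q\mrm{Calk}(\sC)$ by sending $c$ to the pair of essentially finite filtrations
\[
F_1 = \bigl(0 \to E(c) \xrightarrow{\id} E(c) \to 0\bigr), \qquad F_2 = \bigl(0 \to E(c) \xrightarrow{s_c} E(c) \to 0\bigr),
\]
both viewed in $F^q\mrm{Calk}(\sC)$. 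Their associated gradeds agree in $\bigoplus_{\mbf{N}} \mrm{Calk}(\sC)$, both equal to $(0, E(c), 0, \Sigma E(c), 0, \dots)$, since the middle graded piece $\mathrm{cofib}(\id)$ in the first case and $\mathrm{cofib}(s_c) = c$ in the second are both zero in $\mrm{Calk}(\sC)$; this pinpoints a canonical object of the pullback defining $B^q\mrm{Calk}(\sC)$, and the assignment is manifestly functorial in $c$ and in $\sC$. Composing $\Psi_\sC$ with the natural functor $B^q\mrm{Calk}(\sC) \to \Gamma\mrm{Calk}(\sC)$ provided by \propref{prop:categoifying_cofiber} applied to $\Delta \colon F^q\mrm{Calk}(\sC) \to B^q\mrm{Calk}(\sC)$ yields the desired $\Phi_\sC$.

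\textbf{K-theory identification.} Applying $\K$ and composing with the equivalences from \thmref{thm:almost_categorifying_loops} and \propref{prop:categorifying_suspensions}, $\K(\Phi_\sC)$ becomes a natural self-map $\eta_\sC \colon \K(\sC) \to \Omega\Sigma\K(\sC) \simeq \K(\sC)$. By \corref{cor:autoequivalencesKth} it remains only to check that $\eta_\sC$ is non-trivial, for which I compute on $\pi_0$ with $\sC = \Spt^\omega$. The class $[\sphere] \in \K_0(\Spt^\omega)$ is sent to the class of the binary acyclic complex $\Psi_{\Spt^\omega}(\sphere)$; under the Grayson-style identification $\K(B^q\mrm{Calk}(\sC)) \to \Omega\K(\mrm{Calk}(\sC))$ this corresponds to the $\K_1$-class $[s_\sphere \cdot \id^{-1}] = [s_\sphere] \in \K_1(\mrm{Calk}(\Spt^\omega))$; and the boundary map of the Verdier sequence $\Spt^\omega \to \Ind(\Spt^\omega)^{\omega_1} \to \mrm{Calk}(\Spt^\omega)$ returns the class of $\mathrm{cofib}(s_\sphere) = \sphere$ (up to the global sign convention on the boundary). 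So $\eta_{\Spt^\omega}(1) = \pm 1$, and this sign can be absorbed by swapping the roles of $F_1$ and $F_2$ in the construction of $\Psi_\sC$ if necessary. By \corref{cor:autoequivalencesKth}, this forces $\eta_\sC = \id$ for all $\sC$, which is exactly the unit of $\Omega \dashv \Sigma$ on $\K(\sC)$.

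\textbf{Main obstacle.} The delicate step is confirming that $\Psi_\sC(c)$ actually represents the class $[s_c]$ under the natural map $\K(B^q\mrm{Calk}(\sC)) \to \Omega\K(\mrm{Calk}(\sC))$ coming from \propref{prop:categoifying_cofiber}. This is the $\infty$-categorical analogue of the classical Grayson--Nenashev description of $\K_1$ by length-two binary acyclic complexes; pinning it down requires tracing $\Psi_\sC(c)$ through the pullback definition of $B^q$, Tamme's construction used inside \propref{prop:categoifying_cofiber} (in particular identifying the class of our binary complex with the $\K_1$-class of an automorphism), and the standard description of the boundary of the Verdier sequence defining $\mrm{Calk}$. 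The argument is conceptually routine but genuinely requires unwinding the composite of several pullback/pushout identifications.
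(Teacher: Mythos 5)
Your construction of the natural functor $\Psi_\sC$ is a legitimate variant of the one in the paper (the paper builds the binary complex first in $\Ind(\sC)^{\omega_1}$ from two filtrations with \emph{literally} equal gradeds, then passes to the Verdier quotient $\mrm{S}\sC$; yours builds directly in $\mrm{Calk}(\sC)$ using identity vs.\ shift, at the cost of having to specify the identification $\Cofib(s_c)\simeq 0$ coherently in the pullback defining $B^q$ — doable, but an extra step). The final use of \corref{cor:autoequivalencesKth} is also the paper's move. So the skeleton is right.

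The genuine gap is exactly where you flag it: you never prove that the induced map $\K(\sC)\to\Omega\Sigma\K(\sC)$ corresponds to $\pm 1 \in \pi_0\mrm{Nat}(\K,\K)\simeq\Z$, only that it morally should. Tracing a specific class through Tamme's lax pullback $\odot$-construction inside \propref{prop:categoifying_cofiber}, through the pullback-defined $B^q$, and through the connecting map of the Verdier sequence is the actual content of the theorem, and "conceptually routine" does not discharge it — in particular, without it you have no a priori reason the integer is a unit at all (it could be $0$ or $2$, in which case \corref{cor:autoequivalencesKth} says nothing). The paper sidesteps this entirely by a structural argument: in the $3\times 3$ diagram of Karoubi–Verdier sequences, the induced map $\overline\tau\colon\mrm{S}\sC\to B^t\mrm{S}\sC/B^q\mrm{S}\sC$ is manifestly a section of the composite $\beta\colon B^t\mrm{S}\sC/B^q\mrm{S}\sC\xrightarrow{\bot}F\mrm{S}\sC\to\mrm{S}\sC$, and $\beta$ is an equivalence by \cite[Proposition~3.21]{KasprowskiWinges} once one notes $\pi_0\K^{\cn}(\mrm{S}\sC)=0$. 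Hence $\K(\overline\tau)$ is an equivalence, so the natural transformation on K-theory is an equivalence, so by \corref{cor:autoequivalencesKth} it is $\pm\id$ — no boundary-map bookkeeping needed. If you want to complete your version, you would either have to carry out the $\pi_0$ unwinding in full (which is real work and sensitive to sign conventions in the Grayson–Nenashev identification), or replace it with an analogous "section of an equivalence" argument, which is what the paper does.
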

\begin{proof}
The natural functors
 \[ \Ind(\sC)^{\omega_1} \to F^q\Ind(\sC)^{\omega_1},\quad X \mapsto X^{\oplus\mbf{N}} \xrightarrow{\id} X^{\oplus\mbf{N}} \to 0 \to 0 \to \ldots \]
 and
 \[ \Ind(\sC)^{\omega_1} \to F\Ind(\sC)^{\omega_1},\quad X \mapsto X^{\oplus\mbf{N}} \xrightarrow{\simeq} X \oplus X^{\oplus\mbf{N}} \xrightarrow{\mrm{pr}} X \xrightarrow{\id} X \xrightarrow{\id} \ldots \]
 become naturally equivalent after composition with $\mrm{gr}$ since both filtrations have associated graded $(X^{\oplus\mbf{N}},0,\Sigma X^{\oplus\mbf{N}},0,0,\ldots)$.
 Hence we obtain a natural transformation
 \[ \sigma \colon \Ind(-)^{\omega_1} \Rightarrow B^t\Ind(-)^{\omega_1}. \]
 In the sequel, denote by $\mrm{S}\sC$ the Verdier quotient of $\Ind(\sC)^{\omega_1}$ by $\sC$ (note that this differs from $\mrm{Calk}(\sC)$ by an idempotent completion).
 Consider the composition
 \[ \tau \colon \Ind(-)^{\omega_1} \overset{\sigma}{\Rightarrow} B^t\Ind(-)^{\omega_1} \Rightarrow B^t\mrm{S}\sC. \]
 Now observe that the composition
 \[ \tau_0 \colon \id \overset{\yo}{\Rightarrow} \Ind(-)^{\omega_1} \overset{\tau}{\Rightarrow} B^t\mrm{S}(-) \]
 factors over the full subfunctor $B^q\mrm{S}(-)$ of $B^t\mrm{S}(-)$.
 This gives rise to the commutative diagram
 \[\begin{tikzcd}
  \sC\ar[r, "\yo"]\ar[d, "\tau_0"] & \Ind(\sC)^{\omega_1}\ar[r]\ar[d, "\tau"] & \mrm{S}\sC\ar[d, "\overline{\tau}"] \\
  B^q\mrm{S}\sC\ar[r]\ar[d, "\id"] & B^t\mrm{S}\sC\ar[r]\ar[d] & B^t\mrm{S}\sC / B^q\mrm{S}\sC\ar[d] \\
  B^q\mrm{S}\sC\ar[r]\ar[d, "\top"] & B\mrm{S}\sC\ar[r]\ar[d, "\top"] & B\mrm{S}\sC / B^q\mrm{S}\sC\ar[d, "\top"] \\
  F^q\mrm{S}\sC\ar[r] & F\mrm{S}\sC\ar[r] & F\mrm{S}\sC / F^q\mrm{S}\sC
 \end{tikzcd}\]
 in which each row is a Karoubi--Verdier sequence.
 Our goal is to show that the composite
 \[ \sC \xrightarrow{\tau_0} B^q\mrm{S}\sC \to \mrm{Cone}(\Delta_{\mrm{S}\sC}) \]
 gives the required natural transformation.
 
 The sequence
 \[ \K(B^t\mrm{S}\sC / B^q\mrm{S}\sC) \to \K( B\mrm{S}\sC / B^q\mrm{S}\sC ) \to \K( F\mrm{S}\sC / F^q\mrm{S}\sC ) \]
 is a fiber sequence by \cite[Corollary~3.17]{KasprowskiWinges}.
 Since $\K(\Ind(\sC)^{\omega_1}) \simeq 0$, the induced map $\K(\tau)$ lifts to the fiber of $\K(B\mrm{S}\sC) \to \K(F\mrm{S}\sC)$ (which is also trivial by \cite[Lemma~3.10]{KasprowskiWinges}).
 Consequently, we obtain a commutative diagram
 \[\begin{tikzcd}
  \K(\sC)\ar[r]\ar[d, "e"] & \K(\Ind(\sC)^{\omega_1})\ar[r]\ar[d, "\simeq"] & \K(\mrm{S}\sC)\ar[d, "\K(\overline{\tau})"] \\
  \Fib\big(\K(B^q\mrm{S}\sC) \xrightarrow{\K(\top)} \K(F^q\mrm{S}\sC)\big)\ar[r] & 0\ar[r] & \K(B^t\mrm{S}\sC / B^q\mrm{S}\sC)
 \end{tikzcd}\]
 Since $\pi_0\K^\mrm{cn}(\mrm{S}\sC)$ is a quotient of $\pi_0\K^\mrm{cn}(\Ind(\sC)^{\omega_1})$, we have $\pi_0\K^\mrm{cn}(\mrm{S}\sC) = 0$.
 Therefore, \cite[Proposition~3.21]{KasprowskiWinges} implies that the composite functor
 \[ \beta \colon B^t\mrm{S}\sC/B^q\mrm{S}\sC \xrightarrow{\bot} F\mrm{S}\sC \to \mrm{S}\sC \]
 is an equivalence, where the latter functor takes the colimit.
 Since $\overline{\tau}$ is evidently a section to $\beta$, we conclude that $\K(\overline{\tau})$ is an equivalence.
 This shows that we have an equivalence
 \[ e \colon \K(\sC) \xrightarrow{\sim} \Fib\big(\K(B^q\mrm{S}\sC) \xrightarrow{\K(\top)} \K(F^q\mrm{S}\sC)\big) \]
 such that the induced map $\K(\sC) \to \K(B^q\sC)$ is given by $\K(\tau_0)$.
 Since $\top$ admits the section $\Delta$, we may canonically identify
 \[ \Fib\big(\K(B^q\mrm{S}\sC) \xrightarrow{\K(\top)} \K(F^q\mrm{S}\sC)\big) \simeq \Cofib\big( \K(F^q\mrm{S}\sC) \xrightarrow{\K(\Delta)} \K(B^q\mrm{S}\sC)\big), \]
 and under this identification we may recover $e$ as the composite
 \[ \K(\sC) \xrightarrow{\K(\tau_0)} \K(B^q\sC) \to \Cofib\big( \K(F^q\mrm{S}\sC) \xrightarrow{\K(\Delta)} \K(B^q\mrm{S}\sC)\big). \]
 Applying Proposition~\ref{prop:categoifying_cofiber}, we finally conclude that the composite
 \[ \sC \xrightarrow{\tau_0} B^q\mrm{S}\sC \to \mrm{Cone}(\Delta_{\mrm{S}\sC}) \]
 induces an equivalence upon application of $\K$.
 Since $\K$ is invariant under idempotent completion, it follows from Theorem~\ref{thm:categorifying_loops} that the natural functor $\mrm{Cone}(\Delta_{\mrm{S}\sC}) \to \mrm{Cone}(\Delta_{\mrm{Calk}(\sC)})$ induces an equivalence after applying $\K$, and we obtain the desired natural transformation $u:\id \Rightarrow \Gamma \mrm{Calk}$ as the composite
 \[ \id \overset{\tau_0}{\Rightarrow} B^q\mrm{S}(-) \Rightarrow \mrm{Cone}(\Delta_{\mrm{S}(-)}) \Rightarrow  \mrm{Cone}(\Delta_{\mrm{Calk}(-)}) \simeq \Gamma \mrm{Calk}(-). \]
 
 Finally, we observe that the induced natural transformation $\K \to \Omega\Sigma\K$ is either homotopic 
 to the unit map $\eta_{\K}$ or to $-\eta_{\K}$ by Corollary~\ref{cor:autoequivalencesKth}. 
 If the former is the case, we are done. Otherwise, $\Sigma u$ will induce the unit map. 
\end{proof}

\begin{rem}\label{rem:generalizing_categorification}
We note that most of the constructions presented above do not use any special properties of K-theory. 
Propositions~\ref{prop:categorifying_suspensions}~and~\ref{prop:categoifying_cofiber} hold for arbitrary 
localizing invariants in place of K-theory. 
Theorem~\ref{thm:categorifying_loops} as well as the proof of the equivalence in 
Theorem~\ref{thm:unit_loopssuspension} are valid for any finitary localizing invariant. 
In particular, all of these apply to the universal localizing invariant 
$\mathcal{U}_\mrm{loc}:\Cat^\perf_\infty \to \Motnc$.

The fact that the map $u:\sC \to \Gamma \mrm{Calk}(\sC)$ categorifies $\eta$ also holds in this generalised setting, 
but a more involved argument is required. 
\end{rem}

\sssec*{Proof of Theorem~\ref{thm:cat_spectrum}}
Given an arbitrary spectrum $M$, Theorem~\ref{thm:DM} and Lemma~\ref{lem:P1} allow us to identify $M$ with a sequential colimit:
\begin{equation}\label{eq:derivative_colimit}
M \simeq \colim\left( \widetilde\K(\Spt^\omega;M) \to \Omega\widetilde\K(\Spt^\omega;\Sigma M) \to \Omega^2\widetilde\K(\Spt^\omega;\Sigma^2 M) \to \ldots \right). 
\end{equation}
Our goal is to realize this sequential colimit as the K-theory of some stable $\infty$-category.
To do this, we begin with an elementary observation concerning sequential colimits.

\begin{lem}\label{lem:seq_colim_cofibre}
 Consider a diagram $x_0 \xrightarrow{\alpha_0} x_1 \xrightarrow{\alpha_1} x_2 \to \ldots$ in a stable $\infty$-category.
 Suppose that each $\alpha_n$ admits a factorization
 \[ \alpha_n \colon x_n \xrightarrow{\phi_n} x_n' \xrightarrow{\psi_n} x_n'' \xrightarrow{\beta_n} x_{n+1}, \]
 where $\phi_n$ and $\psi_n$ are equivalences.
 
 Then $\colim_n x_n$ is equivalent to the cofiber of the map
 \[ \bigoplus_{n \in \mbf{N}} x_n \oplus x_n'' \to \bigoplus_{n \in \mbf{N}} x_n \oplus x_n' \]
 represented by the following diagram:
 \[\begin{tikzcd}[column sep=4em]
  x_0\ar[r, phantom, "\oplus"]\ar[d, "\id"]\ar[dr, "-\phi_0"] & x_0''\ar[r, phantom, "\oplus"]\ar[d, "\psi_0^{-1}"]\ar[dr, "-\beta_0"] & x_1\ar[r, phantom, "\oplus"]\ar[d, "\id"]\ar[dr, "-\phi_1"] & x_1''\ar[r, phantom, "\oplus"]\ar[d, "\psi_1^{-1}"]\ar[dr, "-\beta_1"] & x_2\ar[r, phantom, "\oplus"]\ar[d, "\id"]\ar[dr, "-\phi_2"] & x_2''\ar[r, phantom, "\oplus"]\ar[d, "\psi_2^{-1}"]\ar[dr, "-\beta_2"] & \ldots \\
  x_0\ar[r, phantom, "\oplus"] & x_0'\ar[r, phantom, "\oplus"] & x_1\ar[r, phantom, "\oplus"] & x_1'\ar[r, phantom, "\oplus"] & x_2\ar[r, phantom, "\oplus"] & x_2'\ar[r, phantom, "\oplus"] & \ldots
 \end{tikzcd}\]
\end{lem}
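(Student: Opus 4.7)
The strategy is to reduce the cofiber in the statement to the classical mapping telescope description of the sequential colimit. Write $P := \bigoplus_n x_n$, $Q := \bigoplus_n x_n'$, $R := \bigoplus_n x_n''$; let $\hat\phi \colon P \to Q$ and $\hat\psi \colon Q \to R$ be the diagonal maps with components $\phi_n$ and $\psi_n$; and let $\hat\beta \colon R \to P$ be the map whose $n$-th component sends $x_n''$ to $\beta_n(x_n'')\in x_{n+1}$. Unpacking the diagram in the statement, the map whose cofiber we must compute has the block form
\[
f \;=\; \begin{pmatrix} \id_P & -\hat\beta \\ -\hat\phi & \hat\psi^{-1} \end{pmatrix} \colon P \oplus R \longrightarrow P \oplus Q.
\]

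The key observation is that the bottom-right block $\hat\psi^{-1}$ is an equivalence, since each $\psi_n$ is. This allows a standard Schur complement / block-LU factorization
\[
f \;=\; \begin{pmatrix} \id_P & -\hat\beta\hat\psi \\ 0 & \id_Q \end{pmatrix} \circ \begin{pmatrix} \id_P - \hat\beta\hat\psi\hat\phi & 0 \\ 0 & \hat\psi^{-1} \end{pmatrix} \circ \begin{pmatrix} \id_P & 0 \\ -\hat\psi\hat\phi & \id_R \end{pmatrix},
\]
which expresses $f$ as the composition of an upper-triangular equivalence, a block-diagonal map, and a lower-triangular equivalence. Therefore $\Cofib(f)$ agrees with the cofiber of the middle map, which splits as $\Cofib(\id_P - \hat\beta\hat\psi\hat\phi) \oplus \Cofib(\hat\psi^{-1})$; the second summand vanishes.

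Now $\hat\beta\hat\psi\hat\phi$ is exactly the ``shifted'' map $\hat\alpha$ whose $n$-th component is $\beta_n\psi_n\phi_n = \alpha_n \colon x_n \to x_{n+1}$. Hence $\Cofib(\id_P - \hat\alpha)$ is the mapping telescope of $(x_n, \alpha_n)$, which computes $\colim_n x_n$ in any stable $\infty$-category with countable direct sums. The only step requiring verification is the block factorization itself, but this is a routine identity: in any additive $\infty$-category, maps between direct sums are encoded by matrices of components, and both sides multiply to the same matrix by formal cancellation using $\alpha_n = \beta_n\psi_n\phi_n$. Conceptually, the main task is simply to recognize the mapping telescope hidden inside the matrix in the statement; once the right elementary move is guessed, the rest is automatic.
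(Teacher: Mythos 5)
Your proof is correct and follows essentially the same idea as the paper's: simplify the given block matrix by a change of basis using the invertibility of $\hat\psi$ and then recognize the mapping telescope. The only difference is organizational — the paper performs a single postcomposition by $\id_P\oplus\hat\psi$ to turn the matrix into the standard telescope map for the refined sequence $x_0\to x_0''\to x_1\to x_1''\to\cdots$, while your two-sided block LU factorization lands directly on the telescope for the original sequence $(x_n,\alpha_n)$ plus a vanishing summand.
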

\begin{proof}
Set $\gamma_n := \psi_n\phi_n \colon x_n \to x_n''$.
 The map in question is equivalent to the map $\bigoplus_{n \in \mbf{N}} x_n \oplus x_n'' \to \bigoplus_{n \in \mbf{N}} x_n \oplus x_n'$ represented by the diagram
 \[\begin{tikzcd}[column sep=4em]
  x_0\ar[r, phantom, "\oplus"]\ar[d, "\id"]\ar[dr, "-\gamma_0"] & x_0''\ar[r, phantom, "\oplus"]\ar[d, "\id"]\ar[dr, "-\beta_0"] & x_1\ar[r, phantom, "\oplus"]\ar[d, "\id"]\ar[dr, "-\gamma_1"] & x_1''\ar[r, phantom, "\oplus"]\ar[d, "\id"]\ar[dr, "-\beta_1"] & x_2\ar[r, phantom, "\oplus"]\ar[d, "\id"]\ar[dr, "-\gamma_2"] & x_2''\ar[r, phantom, "\oplus"]\ar[d, "\id"]\ar[dr, "-\beta_2"] & \ldots \\
  x_0\ar[r, phantom, "\oplus"] & x_0''\ar[r, phantom, "\oplus"] & x_1\ar[r, phantom, "\oplus"] & x_1''\ar[r, phantom, "\oplus"] & x_2\ar[r, phantom, "\oplus"] & x_2''\ar[r, phantom, "\oplus"] & \ldots
 \end{tikzcd}\]
 The cofiber of this map is equivalent to the colimit of the diagram
 \[ x_0 \xrightarrow{\gamma_0} x_0'' \xrightarrow{\beta_0} x_1 \xrightarrow{\gamma_1} x_1'' \xrightarrow{\beta_1} x_2 \xrightarrow{\gamma_2} \ldots, \]
 which is evidently equivalent to $\colim_n x_n$.
\end{proof}

Recall that $\widetilde\K(\Spt^\omega;\Sigma^n M)$ is defined as the cofiber of the map 
\[
\K(\mrm{End}(\Spt^\omega;0)) \stackrel{\K(i_{n-1})}\to  \K(\mrm{End}(\Spt^\omega;\Sigma^{n} M)).
\]
Considering the commutative diagram
\[\begin{tikzcd}
 \mrm{End}(\Spt^\omega;\Sigma^n M)\ar[r, "p_n"]\ar[d] & \mrm{End}(\Spt^\omega;0)\ar[d] \\
 \mrm{End}(\Spt^\omega;0)\ar[r, "i_n"] & \mrm{End}(\Spt^\omega;\Sigma^{n+1} M)
\end{tikzcd}\] 
we may identify the structure maps in the sequential colimit (\ref{eq:derivative_colimit}) with the compositions
\[ \widetilde\K(\Spt^\omega;\Sigma^n M) \xrightarrow{\eta_n} \Omega\Sigma\widetilde\K(\Spt^\omega;\Sigma^n M) \xrightarrow{v_n} \Omega\Cofib(\K(p_n)) \xrightarrow{f_n} \Omega\Cofib(\K(i_n)) = \Omega\widetilde\K(\Spt^\omega;\Sigma^{n+1} M).\]
Here, $\eta_n$ is the unit map $\id \xrightarrow{\sim} \Omega\Sigma$, the map $v_n$ is the inverse of the map induced on the (horizontal) cofibers of the pullback diagram 
\[\begin{tikzcd}
 \K(\Spt^\omega;\Sigma^n M)\ar[r, "\K(p_n)"]\ar[d] & \K(\Spt^\omega;0)\ar[d] \\
 \widetilde\K(\Spt^\omega;\Sigma^n M)\ar[r] & 0
\end{tikzcd}\]
and $f_n$ is the canonical map induced on cofibers.
Note already that we will be able to apply Lemma~\ref{lem:seq_colim_cofibre} to this situation.

First, apply Proposition~\ref{prop:categoifying_cofiber} to the exact functor $\mrm{End}(\Spt^\omega;0) \stackrel{i_{n-1}}\to \mrm{End}(\Spt^\omega;\Sigma^n M)$ to obtain a stable $\infty$-category $\widetilde{\mrm{End}}(\Spt^\omega;\Sigma^n M)$ satisfying
\[ \K(\widetilde{\mrm{End}}(\Spt^\omega;\Sigma^n M)) \simeq \widetilde\K(\Spt^\omega;\Sigma^n M). \]
By Theorem~\ref{thm:unit_loopssuspension}, there exists an exact functor
\[ \phi_n \colon \widetilde{\mrm{End}}(\Spt^\omega;\Sigma^n M) \to \Gamma\mrm{Calk}(\widetilde{\mrm{End}}(\Spt^\omega;\Sigma^n M)) \]
such that $\K(\phi_n)$ is identified with $\eta_n$.
Now observe that the commutative square
\[\begin{tikzcd}
 \mrm{End}(\Spt^\omega;\Sigma^n M)\ar[r, "p_n"]\ar[d] & \mrm{End}(\Spt^\omega;0)\ar[d] \\
 \widetilde{\mrm{End}}(\Spt^\omega;\Sigma^n M)\ar[r] & 0
\end{tikzcd}\]
induces via Proposition~\ref{prop:categoifying_cofiber} an exact functor
\[ \psi_n \colon \mrm{Cone}(p) \to \mrm{Calk}(\widetilde{\mrm{End}}(\Spt^\omega;-\otimes \Sigma^n M)) \]
such that $\K(\Gamma\psi_n)$ is identified with $v_n^{-1}$.
Appealing to Proposition~\ref{prop:categoifying_cofiber} once again, we obtain an exact functor
\[ \beta_n \colon \mrm{Cone}(p) \to \widetilde{\mrm{End}}(\Spt^\omega;\Sigma^{n+1} M) \]
such that $\K(\beta_n)$ is identified with $f_n$.

Now consider the exact functor $F$ represented by the diagram
\[\begin{tikzcd}
  \widetilde{\mrm{End}}(\Spt^\omega;M)\ar[r, phantom, "\oplus"]\ar[d, "\id"]\ar[dr, "\Sigma \circ \phi_0"] & \Gamma\mrm{Cone}(p_0)\ar[r, phantom, "\oplus"]\ar[d, "\Gamma\psi_0"]\ar[dr, "\Sigma \circ \Gamma\beta_0"] & \widetilde{\mrm{End}}(\Spt^\omega;\Sigma M)\ar[r, phantom, "\oplus"]\ar[d, "\id"]\ar[dr, "\Sigma \circ \phi_1"] & \Gamma \mrm{Cone}(p_1)\ar[r, phantom, "\oplus"]\ar[d, "\Gamma\psi_1"]\ar[dr, "\Sigma \circ \Gamma\beta_1"] & \ldots \\
  \widetilde{\mrm{End}}(\Spt^\omega;M)\ar[r, phantom, "\oplus"] & \Gamma\mrm{Calk}(\widetilde{\mrm{End}}(\Spt^\omega;M))\ar[r, phantom, "\oplus"] & \widetilde{\mrm{End}}(\Spt^\omega;\Sigma M)\ar[r, phantom, "\oplus"] & \Gamma\mrm{Calk}(\widetilde{\mrm{End}}(\Spt^\omega;\Sigma M))\ar[r, phantom, "\oplus"] & \ldots
 \end{tikzcd}\]
 Since K-theory preserves arbitrary direct sums and the suspension functors $\Sigma$ induce $-\id$ on K-theory, the preceding discussion in combination with Lemma~\ref{lem:seq_colim_cofibre} shows that
 \[ \K(\mrm{Cone}(F)) \simeq M \]
 as desired.
 
 The functoriality of $\mrm{Cone}$ and $\Gamma$ together with the naturality of the functors $\phi_n$ imply that the construction of $\mrm{Cone}(F)$ refines to a functor $\sC_{(-)} : \Spt \to \Cat^{\perf}_{\infty}$ such that $\K \circ \sC_{(-)} \simeq \id$.

\begin{rem}\label{rem:rings}
Theorem~\ref{thm:cat_spectrum} trivially implies that every abelian group is $\K_0(\sC)$ for some $\sC \in \Cat^\perf_\infty$. 
A more interesting question concerning categorifying (commutative) {\it rings}, rather than abelian groups, remains open. 
Given a ring $R$ one wants to find a stably symmetric monoidal small $\infty$-category $\sC$ with 
$\K_0(\sC) \simeq R$. 
The strongest result in this direction is proved in \cite{levy2023categorifying}.
\end{rem}

\section{Bounded t-structures and the theorem of the heart}\label{sec:boundedtstructures}

\begin{defn}\label{defn:t-str}
Let $\sC$ be a stable $\infty$-category. 
A pair of full subcategories $(\sC_{\ge 0}, \sC_{\le 0})$ is said to be a {\bf t-structure} if the following 
axioms hold:
\begin{enumerate}
\item $\Sigma \sC_{\ge 0} \subset \sC_{\ge 0}$ and $\Omega \sC_{\le 0} \subset \sC_{\le 0}$;
\item for $x \in \sC_{\ge 0}$ and $y\in \sC_{\le 0}$ we have $\pi_0\Maps_\sC(\Sigma x, y) = 0$;
\item for $x\in \sC$ there exists a fiber sequence 
\[
x_{\ge 1} \to x \to x_{\le 0}
\]
such that $x_{\ge 1} \in \Sigma \sC_{\ge 0}$ and $x_{\le 0} \in \sC_{\le 0}$.
\end{enumerate}

The {\bf heart} of a t-structure on $\sC$ is $\sC^\heartsuit := \sC_{\ge 0} \cap \sC_{\le 0}$. 

An exact functor $F:\sC \to \sD$ of stable $\infty$-categories equipped with t-structures is called t-exact 
if $F(\sC_{\ge 0}) \subset \sD_{\ge 0}$ and $F(\sC_{\le 0}) \subset \sD_{\le 0}$.

We set $\sC_{\ge n} = \Sigma^n \sC_{\ge 0}$ and $\sC_{\le n} = \Sigma^n \sC_{\le 0}$.

A t-structure on $\sC$ is called bounded if 
$\sC = \bigcup\limits_{n\in \mbf{N}} (\sC_{\ge -n} \cap \sC_{\le n})$.
\end{defn}

\subsubsection{Basic properties}
\begin{enumerate}
\item It formally follows from the definition that the choice of a fiber sequence in (3) is unique up to 
equivalence and is functorial in $x$. 
\item The subcategories $\sC_{\ge 0}$ and $\sC_{\ge 0}$ determine each other as the corresponding 
categorical orthogonals, that is, we have:
\[
\sC_{\ge 0} = \{x \in \sC : \pi_0\Maps_\sC(\Sigma x,y) = 0 \text{ for any } y \in \sC_{\le 0} \},
\]
\[
\sC_{\le 0} = \{x \in \sC : \pi_0\Maps_\sC(\Sigma y,x) = 0 \text{ for any } y \in \sC_{\ge 0} \}.
\]
\item The heart of a t-structure is always an abelian category. 
\item A stable $\infty$-category with a bounded t-structure is automatically idempotent complete 
\cite[Corollary~2.14]{AGH}. 
\end{enumerate}

\begin{exam}\label{exam:t-spectra}
The $\infty$-category of spectra admits a t-structure with $\Spt_{\ge 0}$ being the subcategory 
of connective spectra and $\Spt_{\le 0}$ the subcategory of coconnective spectra. The fact that 
the axiom (3) is satisfied reflects  
the existence of Postnikov towers of spectra. This t-structure {\bf is not} bounded. The heart of this 
t-structure consists of Eilenberg--MacLane spectra of abelian groups, and is equivalent to $\mathcal{A}b$. 
\end{exam}

\begin{exam}
Generalizing the previous example, for a connective ring spectrum $R$, 
the $\infty$-category of right $R$-module spectra $\Mod_R$ admits a t-structure with $(\Mod_{R})_{\ge 0}$ 
being the subcategory of connective $R$-module spectra. 
\end{exam}

\begin{exam}\label{exam:derived-category}
Given a small abelian category $A$, the bounded derived category $\sD^b(A)$ admits a bounded t-structure 
with $\sD^b(A)_{\ge 0}$ consisting of complexes $x$ with $H_i(x) = 0$ for $i<0$ and similarly for 
$\sD^b(A)_{\le 0}$. The heart of this t-structure is equivalent to $A$.
\end{exam}

The following result shows that the bounded t-structure of Example~\ref{exam:derived-category} is in some sense 
a universal one.

\begin{prop}[{\cite[Proposition~3.26]{AGH}}]\label{prop:derived_universal}
For any small stable $\infty$-category $\sC$ equipped with a bounded t-structure, 
there is a natural t-exact functor 
\[
\sD^b(\sC^\heartsuit) \to \sC
\]
that induces an equivalence on the hearts of the t-structures.
\end{prop}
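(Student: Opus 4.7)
The plan is to invoke a universal property of the bounded derived $\infty$-category. For any small abelian category $A$ one should expect $\sD^b(A)$ to be the initial small stable $\infty$-category equipped with a bounded t-structure and an exact functor from $A$ to its heart; equivalently, restriction to the heart ought to induce an equivalence
\[
\Fun^{\text{t-ex}}(\sD^b(A), \sE) \isoto \Fun^{\text{ex}}(A, \sE^\heartsuit)
\]
for every small stable $\infty$-category $\sE$ equipped with a bounded t-structure, where the left-hand side denotes t-exact functors and the right-hand side denotes exact functors of abelian categories. Granting this universal property, I would apply it with $A = \sC^\heartsuit$ and $\sE = \sC$, feeding in the identity functor on $\sC^\heartsuit$, to produce the required t-exact functor $\sD^b(\sC^\heartsuit) \to \sC$. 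Since it restricts to the identity on $\sC^\heartsuit$ by construction, it automatically induces an equivalence on hearts. Naturality in $\sC$ is then formal: a t-exact functor $\sC \to \sC'$ restricts to an exact functor of hearts, and the universal property propagates this to a compatible map on derived $\infty$-categories.

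To construct the functor (and thereby prove the universal property) I would proceed by induction on amplitude. An object $X \in \sD^b(A)$ whose nonzero homology is concentrated in degrees $[a,b]$ sits in the canonical fiber sequence
\[
\tau_{\ge a+1} X \to X \to H_a(X)[a],
\]
whose left-hand term has strictly smaller amplitude. The endpoint $H_a(X)[a]$ is sent into $\sC$ via $\sC^\heartsuit \hookrightarrow \sC$ composed with the appropriate shift, while the realization of $\tau_{\ge a+1} X$ is supplied by the inductive hypothesis. The bounded t-structure on $\sC$ provides the corresponding fiber sequence there into which the two pieces assemble; the base case of amplitude zero is just the shifted inclusion of the heart.

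The main obstacle is not this object-level construction but its full $\infty$-categorical coherence. An amplitude induction of exactly this flavour is what Beilinson used to build his realization functor in the triangulated setting, where already the functor is not uniquely determined by its values on objects. Promoting it to a coherent functor of stable $\infty$-categories requires choosing a presentation of $\sD^b(A)$ in which the universal property is manifest --- for instance via the Beilinson t-structure on filtered objects, via a suitable dg-enhancement, or via Lurie's treatment of the derived $\infty$-category in \emph{Higher Algebra}. Verifying coherence and t-exactness of the resulting functor, and confirming that the naturality in $\sC$ claimed above really goes through, is the technical heart of the argument, and is presumably where Antieau--Gepner--Heller expend most of their effort.
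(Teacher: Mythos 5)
The paper does not give its own proof of this proposition; it is quoted verbatim from \cite[Proposition~3.26]{AGH}, so the relevant comparison is against the argument in that reference. Your high-level framing is in line with how the result is actually established: one wants a mapping-in universal property for $\sD^b(A)$, saying roughly that t-exact functors out of $\sD^b(A)$ into a stable $\infty$-category with t-structure correspond to exact functors $A \to \sE^\heartsuit$, and then one feeds in the identity of $\sC^\heartsuit$. Naturality is indeed formal once that universal property is in place.

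Where your proposal has a genuine gap is exactly the point you flag at the end: an amplitude induction in the style of Beilinson's realization functor produces a value on each object and a map for each morphism, but in the $\infty$-categorical setting this does not assemble into a coherent exact functor, let alone prove the uniqueness that a universal property requires. Naming the problem (``coherence is the technical heart'') does not close it, and this is not a small remaining detail — it is the entire content. The way the literature actually handles this is not by fighting the coherence head-on but by choosing a presentation of $\sD^b(A)$ in which the universal property is essentially built in: either via the dg-nerve of bounded complexes and the calculus of Verdier quotients (so an exact functor $A \to \sE^\heartsuit$ extends additively to $\mrm{K}^b(A)$ and descends because acyclics are killed), or via the universal property of the prestable $\infty$-category freely generated by $A$ under finite colimits (the $A_\infty^\omega$ / nonabelian derived category picture used elsewhere in this very paper, cf.\ Theorem~\ref{thm:acyclic_tstructure}), followed by stabilization. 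Both routes give the universal property as a tautology of the construction, whereas your amplitude induction would at best recover the classical triangulated-level realization functor, which is famously non-canonical. So the outline is correct, but the step you defer is the step that needs a different idea, not more effort along the same lines.
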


\ssec{K-theoretic properties of t-structures}
Proposition~\ref{prop:derived_universal} yields a canonical map 
\[
\K(\sD^b(\sC^\heartsuit)) \to \K(\sC)
\]
for any small stable $\infty$-category $\sC$ endowed with a bounded t-structure. 
In \cite{AGH} Antieau, Gepner, and Heller formulated the following conjectures:

\begin{conj}[Conjecture~B]\label{conj:Kvanishing}
Let $\sC$ be a small stable $\infty$-category equipped with a bounded t-structure. 
Then the K-theory spectrum of $\sC$ is connective.
\end{conj}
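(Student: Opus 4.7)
The natural starting point is Proposition~\ref{prop:derived_universal}, which supplies a t-exact functor $F \colon \sD^b(\sC^\heartsuit) \to \sC$ inducing an equivalence on hearts. Barwick's connective theorem of the heart shows that $F$ induces an equivalence $\K^\mrm{cn}(\sD^b(\sC^\heartsuit)) \xrightarrow{\sim} \K^\mrm{cn}(\sC)$, and a Gillet--Waldhausen-style comparison with Quillen's K-theory of $\sC^\heartsuit$ shows that $\K(\sD^b(\sC^\heartsuit))$ is connective. Consequently the plan reduces to proving that the canonical map $\K^\mrm{cn}(\sC) \to \K(\sC)$ is an equivalence, i.e.\ that $\K_{-i}(\sC) = 0$ for every $i \geq 1$.

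To control $\K_{-1}$ I would exploit the Calkin construction from Proposition~\ref{prop:categorifying_suspensions}: the equivalence $\K(\mrm{Calk}(\sC)) \simeq \Sigma\K(\sC)$ gives $\K_{-1}(\sC) \cong \K_0(\mrm{Calk}(\sC))$. The idea is to transfer the bounded t-structure on $\sC$ to a bounded t-structure on $\mrm{Calk}(\sC)$, then apply the connective theorem of the heart to $\mrm{Calk}(\sC)$ and analyse $\K_0$ through the associated heart, which should be describable in terms of $\sC^\heartsuit$. Iterating the Calkin construction and bootstrapping through all negative degrees would then yield the desired connectivity of $\K(\sC)$.

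The main obstacle is precisely the construction of a bounded t-structure on $\mrm{Calk}(\sC)$. The Ind-completion $\Ind(\sC)$ inherits only an unbounded t-structure from $\sC$, and neither the passage to $\omega_1$-compact objects nor the Verdier quotient by the Yoneda image preserves any meaningful boundedness: countable direct sums of objects of $\sC^\heartsuit$ remain nonzero in $\mrm{Calk}(\sC)$ but fail to lie in any bounded piece, so every naive candidate t-structure on the Calkin category is genuinely unbounded. In view of the counterexample to Conjecture~C constructed later in the paper via chromatic phenomena, one should in fact expect Conjecture~B to fail in exactly those examples: the chromatic nontriviality engineered in $\K(\sC)$ should manifest itself in negative degrees, obstructing any dévissage from $\sC^\heartsuit$ to $\sC$ in nonconnective K-theory.
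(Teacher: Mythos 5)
This statement is a \emph{conjecture}, not a theorem: the paper does not prove it, and in fact explicitly records (Remark~\ref{rem:history_heart}) that Conjecture~\ref{conj:Kvanishing} is \emph{false}, citing Neeman's construction in \cite{Neeman2021} of a stable $\infty$-category $\sA$ with a bounded t-structure and $\pi_{-2}\K(\sA) \neq 0$. Your closing paragraph, in which you predict that the conjecture should fail for chromatic reasons, is therefore the correct assessment, and is in fact close in spirit to what the paper later does: the counterexample to Conjecture~\ref{conj:thmoftheheart} in \ref{sssec:counterexample}, combined with the observation that Conjecture~B implies Conjecture~C, gives another disproof of Conjecture~B. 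So the honest answer here is a disproof, and you arrived at that, albeit hedged.

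That said, the ``proof'' half of your proposal contains a genuine gap you should recognize. You assert that a Gillet--Waldhausen comparison shows $\K(\sD^b(\sC^\heartsuit))$ is connective. Gillet--Waldhausen only identifies the \emph{connective} K-theory of $\sD^b(A)$ with Quillen's K-theory of $A$; it is silent about negative K-groups. Whether $\K_{-n}(\sD^b(A)) = 0$ for $n \geq 1$ and a general abelian category $A$ is precisely Schlichting's conjecture (see Remark~\ref{rem:history_heart}), which is a special case of Conjecture~B and is proved only under noetherian hypotheses. Assuming it as a lemma is circular. Similarly, your Calkin bootstrap cannot get off the ground for exactly the reason you name: there is no bounded t-structure on $\mrm{Calk}(\sC)$, and more fundamentally, every statement about negative K-groups of a bounded t-structure is the open content of the conjecture itself, so the connective heart theorem of Barwick gives you nothing new in degree $\leq -1$.
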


\begin{conj}[Conjecture~C]\label{conj:thmoftheheart}
Let $\sC$ be a small stable $\infty$-category equipped with a bounded t-structure. 
Then the map 
\[
\K(\sD^b(\sC^\heartsuit)) \to \K(\sC)
\]
is an equivalence of spectra.
\end{conj}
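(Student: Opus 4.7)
The plan is to \emph{disprove} Conjecture~\ref{conj:thmoftheheart}: the introduction indicates the conjecture fails, and the tools assembled above---in particular Theorem~\ref{thmX:cat_spectrum} on categorification of spectra---make a counterexample readily accessible. My strategy proceeds in three steps.

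First, I would choose a spectrum $M$ whose chromatic behaviour is incompatible with K-theory spectra of bounded derived categories of abelian categories. Such categories always have K-theory of ``tame'' chromatic type (for instance, being $T(n)$-acyclic for $n \geq 2$, in the spirit of Mitchell's theorem and its modern extensions), so picking $M$ to be nontrivially $T(n)$-local at some height $n \geq 2$---say a suitable summand of the $T(n)$-local sphere---suffices. Theorem~\ref{thmX:cat_spectrum} then produces a small idempotent complete stable $\infty$-category $\sC$ with $\K(\sC) \simeq M$. However, this $\sC$ carries no evident bounded t-structure at all, so it cannot be fed directly into the conjecture.

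Second, I would transport $\sC$ to a category admitting a bounded t-structure while preserving its chromatic K-theoretic content. Following the Neeman-style construction mentioned in the introduction, I would realize $\sA c(\sC)$ as the stable $\infty$-category of bounded ``acyclic'' chain-complex-like objects in $\sC$---bounded filtered objects whose graded pieces cancel out in a controlled fashion---equipped with the t-structure induced by filtration degree, whose heart is a tractable abelian category built from $\sC$ (e.g.\ a product of copies of a generating category). A Waldhausen-additivity argument along the filtration, combined with cancellation along the differentials, should identify $\K(\sA c(\sC))$ with $\K(\sC)$ up to sign or a harmless shift, so that the chromatic features of $M$ survive.

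Finally, since $\sA c(\sC)^\heartsuit$ is an ordinary abelian category, its bounded derived category has K-theory that is chromatically tame by the input of the first step, while $\K(\sA c(\sC))$ retains the nontrivial $T(n)$-local content of $M$. Hence the map $u$ cannot be an equivalence after $T(n)$-localization, and Conjecture~\ref{conj:thmoftheheart} fails. The main obstacle will be the second step: constructing $\sA c(\sC)$ so that its filtration t-structure is genuinely \emph{bounded} with a workable heart, adapting Neeman's 1-categorical construction to the stable $\infty$-categorical setting, handling idempotent completion correctly, and running the additivity computation with enough precision to pin down $\K(\sA c(\sC))$ rather than merely identifying it up to an indeterminate chromatic loss. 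Once these are in place, comparing $T(n)$-localizations on the two sides of $u$ completes the disproof.
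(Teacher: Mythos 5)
Your high-level strategy is the same as the paper's: use Theorem~\ref{thm:cat_spectrum} to produce a small stable $\infty$-category $\sC = \sC_M$ with $\K(\sC) \simeq M$ for a chromatically exotic $M$, then pass through a Neeman-style $\sA c(\sC)$ that carries a bounded t-structure, and derive a contradiction from the chromatic tameness of $\K(\sD^b(A))$ for abelian $A$. However, your second step contains a genuine gap: the claim that a Waldhausen-additivity computation identifies $\K(\sA c(\sC))$ with $\K(\sC)$ ``up to sign or a harmless shift'' is false, and no such identification is available.

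What actually happens is the following. One considers the counit $L : \sC^\mrm{fin} \to \sC$ of the adjunction between $\Cat^\mrm{padd}_\infty$ and $\Cat^\perf_\infty$, where $\sC^\mrm{fin}$ is the small stable $\infty$-category of finite cell objects in additive presheaves of spectra on $\sC$ (equivalently, the category carrying the universal bounded weight structure built from $\sC$). This $L$ is a Verdier localization with kernel $\sA c(\sC)$, so one gets a \emph{fiber sequence}
\[
\K(\sA c(\sC)) \longrightarrow \K(\sC^\mrm{fin}) \longrightarrow \K(\sC) \simeq M,
\]
and $\K(\sA c(\sC))$ is the fiber, not a shift of $M$. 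The key input is then Theorem~\ref{thm:weight_local}: $\K(\sC^\mrm{fin})$ is always $\K(\Z)$-local, essentially because $\sA \mapsto \sA^\mrm{fin}$ for $\sA$ a (1-)additive category is $\Z$-linear, and the comparison $\sA^\mrm{fin} \to (\mrm{h}\sA)^\mrm{fin}$ has connective fiber on $\K$-theory. Since $\K(\Z)$-local spectra form a stable subcategory, if $\K(\sA c(\sC))$ were $\K(\Z)$-local then $M$ would be too; taking $M = K(2)$ and invoking Mitchell's theorem gives the contradiction. Your write-up does not introduce $\sC^\mrm{fin}$ at all and does not supply the locality statement for the middle term, so the chromatic content of $M$ is not actually shown to ``survive'' in $\K(\sA c(\sC))$. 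Replacing the additivity computation by the fiber sequence plus Theorem~\ref{thm:weight_local} is exactly the missing ingredient. (Your use of $T(n)$-locality instead of $\K(\Z)$-locality via Mitchell is a harmless variant in spirit, but the paper's formulation is cleaner since Mitchell's theorem directly produces $\K(\Z)$-acyclics.)
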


\begin{rem}\label{rem:history_heart}
Both conjectures were proved in \cite{AGH} for t-structures whose heart is 
a {\it noetherian} abelian category. Besides, they showed that $\pi_{-1}\K(\sC) = 0$ without any extra 
assumptions on the t-structure. In \cite{Schlichting2006} a version of 
Conjecture~\ref{conj:Kvanishing} was investigated for $\sC = \sD^b(A)$ and proved under the same assumptions. 

It is also worth pointing out that Conjecture~\ref{conj:thmoftheheart} 
is true for the {\it connective} version of K-theory. This is known as the theorem of the heart and was 
proved by Clark Barwick \cite{Barwick_heart} and a triangulated version of it had been proved even before 
in the works of Amnon Neeman~\cite{Neeman_HEART_IIIA,Neeman_heart_IIIB,Neeman_heart_334}. 
In particular, Conjecture~\ref{conj:thmoftheheart} would follow from Conjecture~\ref{conj:Kvanishing}.

However, Conjecture~\ref{conj:Kvanishing} in its full generality turned out to be false. 
In \cite{Neeman2021} Neeman constructed an explicit stable $\infty$-category $\sA$ with a t-structure such that 
$\pi_{-2}\K(\sA)$ is not zero. Until now Conjecture~\ref{conj:thmoftheheart} has remained open.
\end{rem}

\ssec{Chromatic obstructions to bounded t-structures}\label{ssec:chromatic_tstr}

In this paper we show that Conjecture~\ref{conj:thmoftheheart} is false. 
To do this, we will need to investigate some general {\it chromatic} properties of the spectra 
$\K(\sC)$ for small stable 
$\infty$-categories $\sC$. First recall some basic definitions.

\begin{defn}\label{defn:local-spectra}
Let $E, X$ be spectra. 

\begin{enumerate}
\item 
$X$ is called $E$-{\bf acyclic} if $X \otimes E \simeq 0$. 
\item 
$X$ is called $E$-{\bf local} if it is orthogonal to all $E$-acyclic spectra, i.e. for any $E$-acyclic spectrum $Y$
$\Maps_\Spt(Y,X) = 0$.
\end{enumerate}
\end{defn}


\begin{exam}\label{exam:modules_local}
Assume $E$ is a ring spectrum. 
Then any $E$-module spectrum $X$ is $E$-local. To see this, observe that for any $E$-acyclic spectrum $Y$ 
\[
\Maps_\Spt(Y,X) \simeq \Maps_E(Y \otimes E,X) \simeq \Maps_E(0,X) \simeq 0.
\]
\end{exam}

\begin{exam}\label{exam:connective_local}
Let $E$ be a connective ring spectrum such that $\pi_0R = \Z$. Then 
any (eventually) connective spectrum $X$ is $E$-local. This is an instance of a much more general 
\cite[Theorem~3.1]{Bousfield1979}.
\end{exam}

\begin{exam}\label{exam:nonlocal-Morava}
Fix any prime $p$. A theorem of Mitchell \cite{Mitchell1990} states that 
the Morava K-theory spectra $K(n)$ are $\K(\Z)$-acyclic for $n\ge 2$. In particular, the Morava K-theories 
are all examples of spectra that are not $\K(\Z)$-local.
\end{exam}

Immediately from Example~\ref{exam:nonlocal-Morava} we can observe that the K-theory spectra obtained from 
derived categories are quite special from the chromatic perspective.

\begin{prop}\label{prop:locality_tstructure}
Let $A$ be a small abelian category.
Then $\K(\sD^b(A))$ is a module over the K-theory of the integers $\K(\Z)$. 
\end{prop}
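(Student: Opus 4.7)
The plan is to exhibit $\sD^b(A)$ as a module over $\mbf{Perf}_\Z$ in $\Cat^\perf_\infty$ and then to transport this structure across $\mathcal{U}_{\mrm{loc}}$ using its symmetric monoidal enhancement. Once the module structure on $\sD^b(A)$ is in hand, the $\K(\Z)$-module structure on $\K(\sD^b(A))$ will follow formally from the universal properties of noncommutative motives and of K-theory.

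The main technical step is constructing a tensoring
\[ \mbf{Perf}_\Z \otimes \sD^b(A) \to \sD^b(A). \]
The key observation is that any abelian category is canonically enriched in abelian groups, and this $\Z$-linearity is inherited by the bounded derived $\infty$-category $\sD^b(A)$. Concretely, one can represent a perfect $\Z$-complex as a bounded complex $P^\bullet$ of finitely generated free abelian groups (using that every finitely generated projective $\Z$-module is free) and tensor termwise with a bounded representative $X^\bullet$ of an object of $\sD^b(A)$: each $P^i \otimes X^j$ is a finite direct sum of copies of $X^j$, hence lies in $A$, and the resulting total complex is bounded. The higher coherences needed to upgrade this to a module structure in $\Cat^\perf_\infty$ with respect to the symmetric monoidal structure of \cite{BFN} can be extracted from the fact that $\sD^b(A)$ sits as a full stable subcategory, closed under the action of $\mbf{Perf}_\Z$, of the manifestly $\Z$-linear unbounded derived $\infty$-category of $A$.

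Given such a module structure, the symmetric monoidal enhancement of $\mathcal{U}_{\mrm{loc}}$ (mentioned right after \thmref{thm:universal_property_motives}) ensures that $\mathcal{U}_{\mrm{loc}}(\sD^b(A))$ is naturally a module over $\mathcal{U}_{\mrm{loc}}(\mbf{Perf}_\Z)$ in $\Motnc$. Applying the lax symmetric monoidal functor $\map_{\Motnc}(\unitm,-)$ and invoking \thmref{thm:universality_Ktheory}, we obtain on
\[ \K(\sD^b(A)) \simeq \map_{\Motnc}(\unitm, \mathcal{U}_{\mrm{loc}}(\sD^b(A))) \]
the structure of a module over
\[ \K(\Z) \simeq \map_{\Motnc}(\unitm, \mathcal{U}_{\mrm{loc}}(\mbf{Perf}_\Z)). \]

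The main obstacle lies precisely in the first step: while the $\Z$-linearity of $\sD^b(A)$ is folklore and morally clear from the enrichment of $A$ in abelian groups, turning it into a genuine $\mbf{Perf}_\Z$-module structure in the symmetric monoidal $\infty$-category $\Cat^\perf_\infty$ requires some care with higher coherences. Once this is granted, the passage to K-theory is entirely formal.
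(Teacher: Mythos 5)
Your proposal follows the same strategy as the paper: observe that $\sD^b(A)$ is $\Z$-linear (i.e.\ a $\mbf{Perf}_\Z$-module in $\Cat^\perf_\infty$), then push this module structure through $\mathcal{U}_{\mrm{loc}}$ and the lax symmetric monoidal corepresentation of $\K$ from Theorem~\ref{thm:universality_Ktheory}. The paper states this in three sentences and leaves the $\Z$-linearity as an assertion; you simply spell out more of the coherence discussion, but the argument is the same.
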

\begin{proof}
The stable $\infty$-category $\sD^b(A)$ is $\Z$-linear. In particular, it admits an action by 
$\sD^b(\Z)$. This induces the desired action of $\K(\Z)$ on $\K(\sD^b(A))$.
\end{proof}

\begin{cor}\label{cor:locality_tstructure}
Let $\sC$ be a small stable $\infty$-category equipped with a bounded t-structure. 
If Conjecture~\ref{conj:thmoftheheart} holds for $\sC$, then $\K(\sC)$ is $\K(\Z)$-local.
\end{cor}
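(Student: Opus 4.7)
The plan is to chain together the conjecture, the preceding proposition, and the general fact about modules over ring spectra; essentially nothing else is needed. First, I would invoke the hypothesis that Conjecture~\ref{conj:thmoftheheart} holds for $\sC$ to obtain an equivalence of spectra
\[
\K(\sC) \simeq \K(\sD^b(\sC^\heartsuit)).
\]
Since $\sC$ is small, its heart $\sC^\heartsuit$ is a small abelian category (by the basic properties of bounded t-structures listed after \defref{defn:t-str}), so \propref{prop:locality_tstructure} applies to $A = \sC^\heartsuit$ and endows $\K(\sD^b(\sC^\heartsuit))$ with the structure of a $\K(\Z)$-module spectrum.

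Next, I would transport this module structure along the equivalence above to conclude that $\K(\sC)$ is itself a $\K(\Z)$-module spectrum. Finally, \examref{exam:modules_local} shows that any module over a ring spectrum $E$ is automatically $E$-local, so $\K(\sC)$ is $\K(\Z)$-local, as desired.

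There is no real obstacle in this argument: it is a purely formal consequence of the results already stated in the paper. The only point that deserves care is the observation that transporting a module structure along an equivalence of spectra preserves locality, but this is immediate from \defref{defn:local-spectra} since locality depends only on the underlying spectrum.
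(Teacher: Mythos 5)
Your argument is exactly the one the paper intends: the corollary is stated without a proof because it is precisely the chain you give (the conjectured equivalence, \propref{prop:locality_tstructure}, and \examref{exam:modules_local}). Correct and complete; the final remark about transporting module structures is unnecessary since $\K(\Z)$-locality is a property of the underlying spectrum and thus passes along any equivalence of spectra directly.
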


\sssec{}\label{sssec:finmodules}
For a small additive $\infty$-category $\sA$ consider the $\infty$-category of {\it additive} spectrum-valued presheaves on $\sA$:
\[
\hat{\sA} := \Fun_{\times}(\sA^\mrm{op}, \Spt).
\]
We define $\sA^\mrm{fin}$ to be the smallest stable subcategory of $\hat{\sA}$ that contains the image of the 
Yoneda embedding
\[
\yo : \sA \to \hat{\sA}.
\]
The functor $\sA \to \sA^\mrm{fin}$ is an initial additive functor from $\sA$ into a small stable 
$\infty$-category \cite[Remark~2.1.18]{dgm-elden-vova}. 
The $\infty$-category $\sA^\mrm{fin}$ is idempotent complete if $\sA$ is \cite[Theorem~2.2.9(2)]{dgm-elden-vova}. 

\begin{thm}\label{thm:weight_local}
Let $\sA$ be an idempotent complete small additive $\infty$-category. 
Then $\K(\sA^\mrm{fin})$ is a $\K(\Z)$-local spectrum. 
\end{thm}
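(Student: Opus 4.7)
The plan is to show that $\K(\sA^{\mrm{fin}})$ carries a canonical structure of $\K(\Z)$-module spectrum, which by Example~\ref{exam:modules_local} will immediately yield the desired $\K(\Z)$-locality. This is stronger than what is stated, but it seems to be the most natural way to produce locality uniformly, since $\sA^{\mrm{fin}}$ need not be $\Z$-linear in a useful sense (for example, for $\sA = \mrm{Proj}^{\omega}(\sphere)$ one has $\sA^{\mrm{fin}} = \Spt^\omega$, whose K-theory is $\K(\sphere)$).

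I would first reduce to the case $\sA = \mrm{Proj}^{\omega}(R)$ for a connective $\sE_1$-ring $R$, in which case $\sA^{\mrm{fin}} \simeq \mbf{Perf}_R$. Indeed, every idempotent complete small additive $\infty$-category is the filtered colimit, in $\Cat^{\mrm{padd}}_{\infty}$, of its finitely generated idempotent complete subcategories; each such subcategory is Karoubi-generated by the direct sum $x$ of a finite set of generators, and is therefore of the form $\mrm{Proj}^{\omega}(R)$ with $R = \map_{\sA}(x,x)$ a connective $\sE_1$-ring. The functor $(-)^{\mrm{fin}}$ is left adjoint to the forgetful $\Cat^{\perf}_{\infty} \to \Cat^{\mrm{padd}}_{\infty}$, and $\K$ is finitary, so both preserve this filtered colimit; since $\K(\Z)$-module spectra are closed under filtered colimits in $\Spt$, it suffices to equip $\K(R)$ with a natural $\K(\Z)$-module structure for every connective $\sE_1$-ring $R$.

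For this, I would argue by induction along the Postnikov tower of $R$. The base case $R = \pi_0 R$ is a discrete ring, hence a $\Z$-algebra, so $\mbf{Perf}_R$ is $\Z$-linear and $\K(R)$ carries a $\K(\Z)$-module structure exactly as in Proposition~\ref{prop:locality_tstructure}. At each subsequent stage, $\tau_{\le n+1} R \to \tau_{\le n} R$ is a square-zero extension by a shifted $\pi_0 R$-bimodule $M$, and a Dundas--McCarthy-style extension of Theorem~\ref{thm:DM} identifies the fiber of the induced map $\K(\tau_{\le n+1} R) \to \K(\tau_{\le n} R)$ with a shift of $\mrm{THH}(\tau_{\le n} R; M)$. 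Since $M$ is an $H\pi_0 R$-module, this fiber is naturally a $\K(\Z)$-module, and the module structure propagates up the tower.

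The main obstacle is that $\lim_n \K(\tau_{\le n} R)$ recovers only the connective K-theory $\K^{\mrm{cn}}(R)$, whereas the nonconnective $\K(R)$ generally has nontrivial negative homotopy (e.g. $\K_{-1}(\Z[t, t^{-1}]) = \Z$), so the inductive tower does not directly assemble into a module structure on $\K(R)$. A cleaner and more uniform route, which sidesteps this issue, is to carry out the argument directly in $\Motnc$: one would exhibit a canonical factorization of $\mathcal{U}_{\mrm{loc}} \circ (-)^{\mrm{fin}} \colon \Cat^{\mrm{padd}}_{\infty} \to \Motnc$ through $\Mod_{\mathcal{U}_{\mrm{loc}}(\mbf{Perf}_{\Z})}(\Motnc)$. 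By Theorem~\ref{thm:universality_Ktheory}, mapping out of the unit then transports the module structure automatically to a $\K(\Z)$-module structure on $\K(\sA^{\mrm{fin}})$, treating positive and negative K-groups uniformly. Constructing this motivic enhancement --- which amounts to showing that every additive $\infty$-category canonically linearizes at the level of motives, even though it need not do so at the level of categories --- is the technical heart of the argument.
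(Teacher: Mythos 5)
Your plan is to upgrade locality to an actual $\K(\Z)$-module structure on $\K(\sA^{\mrm{fin}})$, but this is not what the paper does, and the upgrade is where your argument breaks down. You correctly flag the problem yourself: carrying out the motivic enhancement is ``the technical heart of the argument,'' and you leave it unverified. In fact, it is doubtful that it can be carried out. If $\mathcal{U}_{\mrm{loc}}\circ(-)^{\mrm{fin}}$ factored through $\Mod_{\mathcal{U}_{\mrm{loc}}(\mbf{Perf}_\Z)}(\Motnc)$, then applying $\map_{\Motnc}(\unitm,-)$ (which is lax symmetric monoidal) and plugging in $\sA=\mrm{Proj}^{\omega}(\sphere)$ would give $\K(\sphere)$ the structure of a $\K(\Z)$-module spectrum. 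There is no known such structure; the standard map goes the other way ($\K(\sphere)\to\K(\Z)$), and nothing in the paper's toolkit would produce a $\K(\Z)$-action on $\K(\sphere)$. So you are attempting to prove a strictly stronger statement than needed, and the stronger statement is likely false.

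The paper's proof is more elementary and avoids any module structure in the non-discrete case. It splits into two steps. First, when $\sA$ is a classical (i.e.\ $1$-categorical) additive category, $\sA$ is canonically $\Z$-linear, hence so is $\sA^{\mrm{fin}}$, and then $\K(\sA^{\mrm{fin}})$ is a $\K(\Z)$-module, hence local, exactly as in Proposition~\ref{prop:locality_tstructure}. Second, for a general additive $\infty$-category $\sA$, one considers the initial additive functor $\sA \to \mrm{h}\sA$ to a classical additive category. The induced map $\K(\sA^{\mrm{fin}}) \to \K((\mrm{h}\sA)^{\mrm{fin}})$ has \emph{connective} fiber --- this is a result about weighted K-theory (\cite[Theorem~4.3]{Sosnilo_2019}, see also \cite[Corollary~5.3.12]{sosnilo2022weighted}). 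By Example~\ref{exam:connective_local}, connective spectra are $\K(\Z)$-local, and since $\K(\Z)$-local spectra form a stable subcategory, locality of the fiber and of the target forces locality of $\K(\sA^{\mrm{fin}})$. This connectivity-plus-two-out-of-three mechanism is the key idea your proposal is missing; it lets one get locality without producing a module structure. Your observation that the Postnikov-tower induction only controls $\K^{\mrm{cn}}$ and not $\K$ is accurate and is precisely the sort of obstacle the paper's route is designed to sidestep: the nonconnective discrepancy is absorbed by the classical step, where a genuine $\Z$-linear structure is available.
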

\begin{proof}
Assume first that $\sA$ is a classical additive category. Then it is canonically $\Z$-linear and 
so is $\sA^\mrm{fin}$. Hence $\K(\sA^\mrm{fin})$ is a $\K(\Z)$-local spectrum. 

Now for a general $\sA$ consider the initial functor into a classical additive category
\[
\sA \to \mrm{h}\sA.
\] 
This induces a functor 
\[
\sA^\mrm{fin} \to (\mrm{h}\sA)^\mrm{fin}
\]
and a map 
\[
\K(\sA^\mrm{fin}) \to \K((\mrm{h}\sA)^\mrm{fin}).
\]
This map has connective fiber by \cite[Theorem~4.3]{Sosnilo_2019} 
(see also \cite[Corollary~5.3.12]{sosnilo2022weighted}), so it is $\K(\mbf{Z})$-local by Example~\ref{exam:connective_local}.
Since $\K(\mbf{Z})$-local spectra form a stable subcategory, the theorem follows.
\end{proof}

\begin{rem}\label{rem:weight-additives}
The construction $\sA \mapsto \sA^{\mrm{fin}}$ is central in the theory of weight structures. 
In fact, $\sA^{\mrm{fin}}$ is canonically endowed with a bounded weight structure and 
any small stable $\infty$-category equipped with a bounded weight structure is canonically equivalent to one of 
this form. Thus, in the spirit of Corollary~\ref{cor:locality_tstructure}, we can reformulate 
Theorem~\ref{thm:weight_local} into saying that the K-theory of any stable $\infty$-category with a bounded 
weight structure is a $\K(\Z)$-local spectrum.

To further illustrate that $\K(\sA^{\mrm{fin}})$ is $\K(\mbf{Z})$-local when $\sA$ is a classical additive category, let us also remark that $\sA^{\mrm{fin}}$ is equivalent to the Dwyer--Kan localization of the ordinary category of bounded chain complexes at the collection of chain homotopy equivalences.
For example, this can be seen by observing that the full subcategories of chain complexes concentrated in non-negative (respectively non-positive) degrees define a bounded weight structure on this localization.
\end{rem}

\section{Counterexample to the nonconnective theorem of the heart}\label{sec:toh_is_false}

Recall from (\ref{sssec:finmodules}) that we have an adjunction 
\[
\begin{tikzcd}
(-)^\mrm{fin} : \Cat^\mrm{padd}_\infty \arrow[r,shift left=.5ex]
&
\Cat^\perf_\infty : U \arrow[l,shift left=.5ex]
\end{tikzcd}
\]
Given a small stable $\infty$-category $\sC \in \Cat^\perf_\infty$, consider the counit map
\[
L : \sC^{\mrm{fin}} \to \sC.
\]
For any stable $\infty$-category $\sD$ this induces a fully faithful embedding
\[
\Funex(\sC, \sD) \hookrightarrow \Funex(\sC^{\mrm{fin}}, \sD) \simeq \Fun_\times(\sC, \sD).
\]
Thus $L$ is a Verdier localization and its kernel $\sA c(\sC)$ is generated by objects of the form 
\begin{equation}\label{eq:generators}
\Cofib(\yo(x)/\yo(y) \to \yo(x/y))
\end{equation}
for all maps $y\to x$ in $\sC$ (see also \cite[Proposition~4.22]{Klemenc_2022}).

\begin{thm}\label{thm:acyclic_tstructure}
For any small stable $\infty$-category $\sC$ there is a bounded t-structure on $\sA c(\sC)$ whose heart 
contains all the objects (\ref{eq:generators}).
\end{thm}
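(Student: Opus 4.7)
The plan is to adapt Neeman's construction from \cite[Section~1]{Neeman2021} to the $\infty$-categorical setting. The guiding philosophy is to think of $\sC^{\mrm{fin}}$ as an $\infty$-category of bounded ``chain complexes'' in $\sC$ viewed as an additive $\infty$-category, with the counit $L : \sC^{\mrm{fin}} \to \sC$ playing the role of the total-complex functor; under this picture, $\sA c(\sC)$ becomes the subcategory of acyclic complexes, and each generator $K = \Cofib(\yo(x)/\yo(y) \to \yo(z))$ corresponds to the basic three-term acyclic complex determined by the cofiber sequence $y \to x \to z$.

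First, I would construct candidate connective and coconnective subcategories by defining $\sA c(\sC)_{\ge 0}$ (respectively $\sA c(\sC)_{\le 0}$) as the smallest full subcategory of $\sA c(\sC)$ closed under extensions and containing all $\Sigma^k K$ for $k \ge 0$ (respectively $k \le 0$), as $K$ ranges over the generators. By construction, every generator then lies in the putative heart $\sA c(\sC)_{\ge 0} \cap \sA c(\sC)_{\le 0}$. Boundedness would essentially be immediate: $\sA c(\sC)$ is generated as a stable $\infty$-category by the $K$'s, so every object is a finite iterated extension of shifts of generators and therefore sits in a bounded range.

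To exhibit the required truncation triangles I would leverage the natural bounded weight structure on $\sC^{\mrm{fin}}$ with heart $\yo(\sC)$, which lets one represent objects of $\sA c(\sC)$ by acyclic ``complexes'' $C_\bullet$ in $\sC$. For such a $C_\bullet$ one defines cycle objects $Z_n := \Fib(C_n \to C_{n-1})$ in the stable $\infty$-category $\sC$; acyclicity then furnishes cofiber sequences $Z_n \to C_n \to Z_{n-1}$ that assemble into smart truncations $\tau_{\le n} C_\bullet$ and $\tau_{\ge n+1} C_\bullet$ inside $\sA c(\sC)$, sitting in the desired fiber sequence. The Hom-vanishing axiom would then be verified using the defining cofiber sequence $\yo(x)/\yo(y) \to \yo(z) \to K$ at the source (and its analogue at the target), reducing mapping-spectrum computations in $\sA c(\sC)$ to computations in $\sC^{\mrm{fin}}$ that can be controlled by the acyclicity condition and the stability of $\sC$.

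The main obstacle, which is also the technical core of Neeman's argument, is making the cycle decomposition fully rigorous in the $\infty$-categorical framework: one must verify that the putative truncation functors are well-defined up to contractible ambiguity and assemble into honest endofunctors of $\sA c(\sC)$, and simultaneously pin down the mapping-spectrum computations required by orthogonality. Both tasks amount to a careful transposition of Neeman's combinatorial argument for triangulated categories into the language of stable $\infty$-categories, using the machinery of \cite{dgm-elden-vova} and weight-filtered Postnikov towers to keep track of the requisite homotopy coherences.
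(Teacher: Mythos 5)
Your proposal takes a genuinely different route from the paper, and it has a real gap at its technical core. The paper does not attempt to build truncation functors by hand: instead it invokes the identification $\sC^{\mrm{fin}} \simeq \mrm{SW}(A_\infty^\omega(\sC))$, where $A_\infty^\omega(\sC) \subset \Fun_\times(\sC^\op, \sS)$ is the \emph{prestable} $\infty$-category generated under finite colimits by representables (following \cite[Definition~4.20]{patchkoria2023adams}). Since $\sC$ has finite limits, \cite[Theorem~4.26]{patchkoria2023adams} gives that $A_\infty^\omega(\sC)$ does too, and then \cite[Proposition~C.1.2.9]{SAG} produces a t-structure on $\sC^{\mrm{fin}}$ essentially for free. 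The point is that the entire burden of constructing truncations is absorbed into the general theory of prestable $\infty$-categories; the only thing left to check is that $L\colon \sC^{\mrm{fin}}\to\sC$ is t-exact (which follows from left exactness of $A_\infty^\omega(\sC)\to\sC$ via \cite[Theorem~4.49]{patchkoria2023adams} and \cite[Proposition~C.3.2.1]{SAG}), so the t-structure restricts to the kernel $\sA c(\sC)$.

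Your outline instead tries to transpose Neeman's explicit cycle-object construction from exact categories. The problem is precisely the step you flag as ``the main obstacle'': in the exact-category setting, objects of the analogue of $\sA c(\sC)$ \emph{are} literal acyclic chain complexes, so $Z_n = \Ker(C_n\to C_{n-1})$ and the truncations $\tau_{\le n}C_\bullet$, $\tau_{\ge n+1}C_\bullet$ are defined on the nose. In the $\infty$-categorical setting, the bounded weight structure on $\sC^{\mrm{fin}}$ does \emph{not} give you a functorial chain-complex model of an object: the weight complex functor is only defined up to non-canonical homotopy (and fails to be functorial in the naive sense). So writing $Z_n := \Fib(C_n\to C_{n-1})$ and claiming that the resulting cofiber sequences ``assemble into smart truncations'' is not a construction one can carry out without additional coherence input; this is exactly where Neeman's combinatorial argument does not transpose verbatim. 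You acknowledge the gap, but ``a careful transposition'' is doing all the work here and none of it is supplied. The paper's route via prestability and \cite[Proposition~C.1.2.9]{SAG} is precisely the tool that makes this coherence problem evaporate; alternatively (cf.\ \remref{rem:exactcat_ac}), one can cite \cite[Proposition~3.16]{Klemenc_2022} to see that the Postnikov t-structure on $\Fun_\times(\sC^\op,\Spt)$ already restricts to $\sA c(\sC)$. Either way, some external input beyond Neeman's bare-hands argument is needed; your proposal as written stops short of it.

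Your handling of the remaining axioms is broadly fine: putting the generators in both $\sA c(\sC)_{\ge 0}$ and $\sA c(\sC)_{\le 0}$, and deducing boundedness from the fact that $\sA c(\sC)$ is generated under finite (co)limits by the objects \eqref{eq:generators}, matches the paper's closing argument. But without truncations the structure you define is not yet shown to be a t-structure.
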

\begin{proof}
Following \cite[Definition~4.20]{patchkoria2023adams}, for an additive $\infty$-category $\sA$ we denote by 
$A_\infty^\omega(\sA)$ the smallest subcategory 
of $\Fun_\times(\sA^{\mrm{op}}, \sS)$ containing the image of the Yoneda embedding 
and closed under finite colimits. This is a prestable $\infty$-category and the functor 
$\sA \to A_\infty^\omega(\sA)$ satisfies the following universal property: 
any additive functor $\sA \to \sB$ where $\sB$ is an additive $\infty$-category with finite colimits factors 
essentially uniquely through a right exact functor $A_\infty^\omega(\sA) \to \sB$. 
There is a canonical equivalence
\[
\sA^\mrm{fin} \simeq \mrm{SW}(A_\infty^\omega(\sA))
\]
which follows from the universal properties of both sides (see also \cite[Remark~2.1.18]{dgm-elden-vova}).

Now consider the case $\sA = \sC$. Since $\sC$ admits finite limits, $A_\infty^\omega(\sC)$ does, 
too, by \cite[Theorem~4.26]{patchkoria2023adams}. Hence, by \cite[Proposition~C.1.2.9]{SAG} there is a t-structure on $\sC^\mrm{fin}$. 
Moreover, the induced functor
\[
A_\infty^\omega(\sC) \to \sC
\]
is left exact by \cite[Theorem~4.49]{patchkoria2023adams}. 
By \cite[Proposition~C.3.2.1]{SAG} the induced functor 
\[
\sC^\mrm{fin} \to \sC
\]
is t-exact where $\sC$ is endowed with the trivial t-structure: $\sC_{t\ge 0} = \sC$ and 
$\sC_{t\le 0} = \{0\}$. 
In particular, the t-structure on $\sC^\mrm{fin}$ restricts to $\sA c (\sC)$. Note that 
all objects of the form 
(\ref{eq:generators}) have discrete values on objects of $\sC \subset \sC^\mrm{fin}$ 
and thus belong to the heart of the t-structure. Since $\sA c(\sC)$ is generated by these objects under finite 
colimits and finite limits, we conclude that the t-structure on $\sA c(\sC)$ is bounded.
\end{proof}

\begin{rem}\label{rem:exactcat_ac}
The construction of a t-structure on $\sA c(\sC)$ is a direct adaptation of a construction of Neeman 
done in the setting of exact categories \cite[Section~1]{Neeman2021}. 

The t-structure of Theorem~\ref{thm:acyclic_tstructure} is also implicit in \cite{Klemenc_2022}: Proposition~3.16 in 
\textit{loc.\ cit.} asserts that the connective cover of any object in $\sA c(\sC)$ also lies in $\sA c(\sC)$, so the 
Postnikov t-structure on spectrum-valued presheaves retricts to a t-structure on $\sA c(\sC)$.
Since $\sA c(\sC)$ is generated by discrete objects (cf.~\cite[Proposition~3.10]{Klemenc_2022}), the resulting 
t-structure is bounded.
\end{rem}

\begin{rem}\label{rem:bounded_objects}
From the proof of Theorem~\ref{thm:acyclic_tstructure} it follows that $\sC^\mrm{fin}$ also 
admits a t-structure. It is, however, not bounded (from above), and $\sA c(\sC)$ is exactly the subcategory of 
t-bounded  objects in $\sC^\mrm{fin}$. Furthermore, one can identify the heart of the t-structure as the 
category of finitely presented additive presheaves of abelian groups\footnote{ 
also known as the {\it Freyd envelope} of $\sC$ \cite[Section~3]{Freyd1966}, \cite[Chapter~5]{Neeman2001}} on $\sC$
(see \cite[Example~4.17]{patchkoria2023adams}).
\end{rem}

\begin{rem}\label{rem:perfect_derived_category}
Theorem~\ref{thm:acyclic_tstructure} was stated and proved by Patchkoria and Pstr\k{a}gowski using a slightly 
different language in \cite[Section~5.2]{patchkoria2023adams}. 
There they define the {\it perfect derived} 
$\infty$-category $\sD^\omega(\sC; H)$ of a stable $\infty$-category $\sC$ associated with an {\it adapted} 
homology theory $H$ on $\sC$. It is a prestable $\infty$-category with finite limits. 
They construct a left and right exact functor of prestable $\infty$-categories 
\[
A_\infty^\omega(\sC) \to \sD^\omega(\sC; H)
\]
which is a localization and whose kernel consists of bounded objects (see \cite[Theorem~5.34(4)]{patchkoria2023adams}). After applying $(-)^\mrm{fin}$ this yields a t-exact Verdier localization functor such 
that the 
kernel consists of t-bounded objects. 
In case $H$ is the trivial homology theory valued in the trivial abelian category, we have $\sD^\omega(\sC; H) = \sC$ and deduce the claim of Theorem~\ref{thm:acyclic_tstructure}.
\end{rem}

Now we can finally show Theorem~\ref{thmX:tohisfalse}.

\sssec{Construction of the counterexample}\label{sssec:counterexample}

For any stable $\infty$-category $\sC$ we have a fiber sequence
\[
\K(\sA c(\sC)) \to \K(\sC^\mrm{fin}) \to \K(\sC).
\]
For any spectrum $M$ we can apply this to $\sC = \sC_M$ from Theorem~\ref{thm:cat_spectrum}. We now have 
a fiber sequence 
\[
\K(\sA c(\sC)) \to \K(\sC^\mrm{fin}) \to M.
\]
Theorem~\ref{thm:weight_local} says that the spectrum $\K(\sC^\mrm{fin})$ is $\K(\Z)$-local. 
Thus, for any $M$ that is not $\K(\Z)$-local, $\K(\sA c(\sC))$ is not $\K(\Z)$-local. 
Such spectra exist: we can pick $M=K(2)$ by Example~\ref{exam:nonlocal-Morava}. 
Since $\sA c (\sC_M)$ is endowed with a bounded t-structure,
Conjecture~\ref{conj:thmoftheheart} fails by Corollary~\ref{cor:locality_tstructure}.

\begin{rem}\label{rem:K_notA1invariant}
The formula 
\[
\sC \mapsto \colim_{\Delta^{\mrm{op}}} \K(\sC \otimes \mbf{Perf}_{\sphere[\Delta^\bullet]})
\]
defines a localizing invariant $\mrm{KH}:\Cat^\perf_\infty \to \Spt$ (see \cite[1.14.2]{vova-invariance} and also 
\cite[Definition~3.13]{Land_2019}). 
By \cite[Corollary~5.1.9]{cisinski2017a1homotopy} (see also \cite[Theorem~A]{Khan_2019}) we have 
\[
\mrm{KH}(\sphere) \simeq \mrm{KH}(\Z) \simeq \mrm{K}(\Z),
\] 
which gives a $\mrm{K}(\Z)$-module structure on $\mrm{KH}(\sC)$ for any $\sC \in \Cat^\perf_\infty$. 
Now the construction in (\ref{sssec:counterexample}) shows that there exists a stable $\infty$-category $\sC$ endowed 
with a bounded t-structure such that $\K(\sC)$ is not equivalent to $\mrm{KH}(\sC)$. 
In particular, the maps in the colimit defining \mrm{KH} are not all equivalences and so, there exists an 
integer $n>0$ such that the map
\[
\mrm{K}(\sC) \to \mrm{K}(\sC \otimes \mbf{Perf}_{\sphere[t_1,\dots, t_n]})
\]
is {\it not} an equivalence. This shows that the $\mathbb{A}^n$-invariance in the sense of 
\cite[Theorem~1.6]{Burklund_2023} does not hold for nonconnective algebraic K-theory.
\end{rem}

\let\mathbb=\mathbf

{\small
\bibliography{references}
}

\parskip 0pt

\end{document}